\theoremstyle{plain}
\newtheorem{thm}{Theorem}[section]
\newtheorem{theorem}[thm]{Theorem}
\newtheorem{corollary}[thm]{Corollary}
\newtheorem{lemma}[thm]{Lemma}
\newtheorem{proposition}[thm]{Proposition}
\newtheorem{prop}[thm]{Proposition}
\newtheorem{definition}[thm]{Definition}
\newtheorem{assumption}[thm]{Assumption}
\theoremstyle{remark} 
\newtheorem{remark}[thm]{Remark}
\newcommand{\an}{\mathrm{an}}
\newcommand{\X}{\mathcal{X}}
\newcommand{\CZ}{\mathcal{Z}}
\newcommand{\CW}{\mathcal{W}}
\renewcommand{\phi}{\varphi}
\newcommand{\Div}{\mathrm{Div}}
\renewcommand{\O}{\mathscr{O}}
\renewcommand{\C}{\mathbb{C}}
\renewcommand{\d}{\mathrm{d}}
\renewcommand{\P}{\mathbf{P}}
\newcommand{\Q}{\mathbb{Q}}
\newcommand{\R}{\mathbb{R}}
\newcommand{\Z}{\mathbb{Z}}
\newcommand{\Hom}{\mathrm{Hom}}
\newcommand{\Pic}{\mathrm{Pic}}
\newcommand{\hPic}{\widehat{\mathrm{Pic}}}
\renewcommand{\div}{\mathrm{div}}
\DeclareMathOperator\Spec{Spec}
\newcommand{\hDiv}{\widehat{\mathrm{Div}}}
\newcommand{\eqv}{\mathrm{eqv}}
\newcommand{\Ban}{\mathrm{Ban}}
\newcommand{\MA}{\mathrm{MA}}
\renewcommand{\mod}{\mathrm{mod}}
\newcommand{\ord}{\mathrm{ord}}
\newcommand{\hyb}{\mathrm{hyb}}
\newcommand{\trop}{\mathrm{trop}}
\newcommand{\SF}{\mathrm{SF}}
\newcommand{\CH}{\widehat{CH}}
\newcommand{\et}{\text{\'et}}
\newcommand{\ppb}{\partial\bar{\partial}}
\newcommand{\M}{\mathcal{M}}
\title{Parametrization of geometric Beilinson--Bloch heights via adelic line bundles}
\author{Yinchong Song \\  \href{mailto:yinchongsong@stu.pku.edu.cn}{yinchongsong@stu.pku.edu.cn} }
\date{}
\begin{document}
	
\maketitle

\begin{abstract}
	Let $ S $ be a smooth quasi-projective variety over complex field $ \C $. For a smooth projective morphism $ \pi:X\to S $, we will introduce a new height pairing
	\begin{align*}
	CH^p_{\hom}(X/S) \times CH^q_{\hom}(X/S) \to \widetilde{\mathrm{Pic}}(S)
	\end{align*}
	with  $ \widetilde{\mathrm{Pic}}(S) $ the group of geometric adelic line bundles in the sense of Yuan--Zhang. It essentially parametrizes the asymptotic height pairing introduced by Brosnan and Pearlstein. We will show that this asymptotic height pairing coincides with Beilinson--Bloch pairing under certain conditions. 
\end{abstract}

\tableofcontents

\section{Introduction}

In \cite{Bei}, Beilinson introduced a (conditional) height pairing for homologically trivial cycles on a smooth projective variety $ X $ over a number field or a function field of one variable. In the function field case, it is a well-defined $ \Q_{\ell} $-valued pairing, and is conjectured to be $ \Q $-valued. In the number field case, we only know the existence of archimedean local height, and the non-archimedean part is still a conjecture. Bloch also introduced a height pairing in \cite{Blo}. In \cite{RS}, they generalize the $ \ell $-adic definition for high-dimensional base, and in \cite{Kah}, they give a refined height pairing using cycles. These two pairing only agree under certain conditions.

In our paper, we will introduce a new height pairing over the complex function field with target adelic line bundles. Let $ X,S $ be quasi-projective varieties over $ \C $, and $ \pi:X\to S $ be projective smooth of relative dimension $ n $. Let $ CH^p_{\hom}(X/S) $ be the subgroup of Chow group of $ X $ generated by subvarieties on $ X $ which are flat over $ S $ and have homologically trivial fibers. For $ p+q=n+1 $, we will define a pairing
\begin{align*}
CH^p_{\hom}(X/S) \times CH^q_{\hom}(X/S) \to \widetilde{\mathrm{Pic}}(S)
\end{align*}
which essentially parametrizes the asymptotic height pairing in \cite{BP}. It is functorial under pull-back by $ S'\to S $ where $ S' $ smooth quasi-projective. If $ K $ has transcendental degree $ 1 $, we will show that this height pairing agrees with Beilinson--Bloch height under certain conditions. 

The proof will be an analytic proof, both complex and Berkovich analytic. To begin with, we first study some point set topology property of the Berkovich analytic space, especially under the hybrid norm. Then we study Yuan--Zhang's adelic line bundles from a analytic point of view, and show that their theory behaves perfectly well under Berkovich analytification once we use hybrid norm to replace the usual norm in the local case. As a result, we get an analytic criterion of adelic line bundles. 

For any two cycles $ Z,W \in CH^p_{\hom}(X/S) $, Hain constructed a hermitian line bundle $ \bar{L}_{Z,W} $ on $ S^{\an} $ in \cite{Hai90}, which was later studied in \cite{BP}. Using our criterion, we see that $ \bar{L}_{Z,W} $ will determine a geometric adelic line bundle $ \tilde{L}_{Z,W}$ which parametrizes the asymptotic height pairing, thus we get our results. Finally, we show that asymptotic height pairing agrees with the geometric Beilinson--Bloch pairing under certain conditions.

\subsection{Berkovich spaces}

Let $ k $ be any noetherian ring equipped with a Banach semi-norm $ |\cdot|_{\Ban} $. Let $ U $ be a quasi-projective integral scheme, flat over $ k $. Then we obtain a Berkovich analytic space $ U^{\an} $ as in \cite{Ber09} containing all possible valuations of residue fields of points in $ U $ whose restriction to $ k $ is bounded by $ |\cdot|_{\Ban} $. It is a Hausdorff, locally compact topological space. 

In the global case, let $ (k,|\cdot|_{\Ban}) $ be either $ \Z $ with the usual norm or a field with the trivial norm. For a quasi-projective variety $ U $ over $ k $, we introduce two topological spaces related to $ U^{\an} $ in \S \ref{Berkovich spaces}. We define the \emph{interior part} $ U^{\beth}\subset U^{\an} $ to be the set of points where the reduction map 
$$ \mathrm{red}:U^{\an}\dashrightarrow U $$
is well-defined, and define the \emph{boundary part} 
$$ U^{b} :=  U^{\an}\backslash U^{\beth} $$  
to be the complement. Let $ \widetilde{U}^b:=U^{b}/\!\sim_{\eqv} $ be the quotient space of $ U^b $ modulo the norm-equivalence relation. The space $ U^{\beth} $ is closely related to the analytification of formal algebraic $ k $-schemes in \cite{Thu}. 

In the local case, let $ k $ is a complete field with a non-trivial valuation $ |\cdot| $, and let $ |\cdot|_0 $ be the trivial valuation on $ k $.  Define the hybrid norm $ |\cdot|_{\hyb}:=\max \{|\cdot|,|\cdot|_0\} $. We will show that $ (\Spec K)_{\hyb}^{\an}  $ is homeomorphic to the closed interval $ [0,1] $. For a quasi-projective variety $ U $ over $ k $, we have the hybrid analytification $ U^{\an}_{\hyb} $ with a structure map $ \pi:U^{\an}_{\hyb}\to (\Spec K)^{\an}_{\hyb}\cong [0,1] $. 

In practice, the space $ U^{\an}_{\hyb} $ could be viewed as a trivial fiber bundle  $ U^{\an}_t $ parametrized by $ t\in (0,1] $, and when $ t\to 0 $, the spaces $ U^{\an}_t $ ``converge'' to  $ U^{\an}_0 $, the Berkovich space over trivially valued field. We can define the spaces $ U^{\beth} $ and $ \widetilde{U}^b $ similarly as the global case. 

\begin{theorem}{(Theorem \ref{ThmBerkovichBoundary})}
	In both local and global cases above, the topological spaces $ U^{\beth} $ and $ \widetilde{U}^b $ are compact Hausdorff. 
\end{theorem}

In the local case, points in $ U^{\an}_t $ for $ t\in (0,1] $ do not have reduction in $ U $. Modulo equivalence, we see that $ \widetilde{U}^b $ would contain an open subspaces homeomorphic to $ U^{\an}_1 $, which is just the usual Berkovich analytic space over $ k $. We call $ \widetilde{U}^b $ the hybrid compactification of $ U^{\an} $. We will compare it with some other compactifications like \cite{AN,BJ} in \S \ref{HybridSpace}.

\subsection{Adelic line bundles over quasi-projective varieties}
Recently, Yuan and Zhang introduced adelic divisors and line bundles on quasi-projective varieties in \cite{YZ}. Let $ k=\Z $ or a field, and let $ X $ be a projective flat integral scheme over $ k $. If $ k=\Z $, let $ \hDiv(X) $ be the group of Cartier divisors on $ X $ with a conjugation-invariant Green function $ g_D $ on $ X_{\C}^{\an} $ which induce a hermitian metric $ \|\cdot\| $ on $ \O(D)_{\C}^{\an} $ on $ X_{\C}^{\an} $. If $ k $ is a field, this is called the geometric case, and by abuse of notation, $ \hDiv(X) $ is just $ \Div(X) $, the group of Cartier divisors.  

Now let $ U $ be quasi-projective flat integral scheme over $ k $. Let $ \hDiv(U/k) $ be the group of adelic divisors on $ U $ defined in \cite[\S 2]{YZ}. It is a reasonable limit of a sequence of divisors on projective models. Roughly speaking, to give an adelic divisor $ \bar{D}\in \hDiv(U/k) $, we need a usual divisor $ D\in \Div(U) $, and a sequence $ \bar{D}_i\in \hDiv(X_i)_\Q $ on different projective models $ X_i $ of $ U $, such that all restrictions $ D_i|_U $ are equal to $ D $, and the sequence $ \{\bar{D}_i\}_i $ satisfies a Cauchy condition on the boundary. See \S \ref{AdelicDivisors} for more precise definitions. 

Let $ \hDiv(U/k)_{b} $ be the subgroup of $ \hDiv(U/k) $ consisting of adelic divisors whose underlying divisor on $ U $ is trivial.

In \cite[\S 3]{YZ}, they also, define an analytification map of adelic divisors using Berkovich analytic space.

Now let $ k $ be either $ \Z $ or a field. If $ k=\Z $, let $ |\cdot|_{\Ban} $ be the usual absolute norm. If $ k $ is a field, let $ |\cdot|_{\Ban} $ be the trivial norm on $ k $. For $ U $ quasi-projective variety over $ k $, and $ U^{\an} $ be the analytification of $ U $.. 

A \emph{metrized divisor} $\bar{D}=(D,g_D) $ on $ U^{\an} $ consists of a Cartier divisor $ D\in \Div(U) $ and a continuous Green function 
$$ g_D:U^{\an}\backslash |D|^{\an}\to \R, $$ such that if $ D $ is locally defined by a function $ f\in K(U)^* $, then $ g_D(x)+\log |f(x)| $ locally extends continuously to $ |D|^{\an} $. The metric is called \emph{norm-equivariant} if for any $ x,y\in U^{\an}\backslash |D|^{\an} $ satisfying $ |\cdot|_x = |\cdot|_y^t $ for some $ t\in (0,\infty) $, then $ g_D(x)=tg_D(y) $. 

Let $ \hDiv(U^{\an})_{\eqv} $ be the group of norm-equivariant metrized divisors on $ U^{\an} $. Let $ \hDiv(U^{\an}){\eqv,b} $ be the subgroup of $ \hDiv(U^{\an})_{\eqv} $ consisting of metrized divisors $ (D,g_D) $ whose underlying divisor on $ U $ is trivial.  

Yuan and Zhang defined a Berkovich analytification map of adelic line bundles in \cite[\S 3]{YZ}, and obtain a canonical injection  $ \hDiv(U/k)\to \hDiv(U^{\an})_{\eqv} $ mapping $ \hDiv(U/k)_b $ into $ \hDiv(U^{\an})_{\eqv,b} $. These groups are equipped with the boundary topology.

In our paper, we will show that the above analytification maps are in fact bijective. 
\begin{theorem}\label{Theorem1}{(Theorem \ref{Theorem3.7})}
	Let $ k $ be either $ \Z $ or a field. Let $ U $ be a quasi-projective flat integral scheme over $ k $. Then the above canonical maps
	\begin{align*}
	\hDiv(U/k)\to \hDiv(U^{\an})_{\eqv},\\
	\hDiv(U/k)_b\to \hDiv(U^{\an})_{\eqv,b}
	\end{align*}
	are homeomorphisms. If the boundary norm on both sides are given by the same boundary divisor, they are isometries. 
	
	Further, under the canonical isomorphism $ \hDiv(U^{\an})_{\eqv,b} \cong C(U^{\an}_{\eqv}) $ which  maps the metrized divisor $ (0,g) $ to the function $ g $, the boundary topology is equivalent to the locally uniform convergence topology. 
\end{theorem}

Note that for non-normal variety $ U $, a metrized divisor $ (D,g_D) $ is not uniquely determined by its Green function. It may cause some problems if one tried to study metrized divisors purely analytically. 

It is not true for essentially quasi-projective variety, as explained in Remark \ref{Rem3.14}. 

Since adelic line bundles are limits of line bundles on projective models, it is equivalent to say that any continuous norm-equivariant metric on quasi-projective varieties can be approximated by model ones, which is a generalization of \cite[Theorem 7.12]{Gub}, and the proofs are similar.

We can also get similar results for local fields $ K $. This time it is different from Yuan--Zhang's one. 

Let $ K $ be either a complete non-archimedean field with a non-trivial valuation, or $ K=\R $ or $ \C $ with the standard absolute norm. The norm on $ K $ is denoted by $ |\cdot| $. Let $ |\cdot|_0 $ be the trivial norm on $ K $. Define the hybrid norm as $$ |\cdot|_{\hyb} = \max\{|\cdot|,|\cdot|_0\}. $$

Let $ U $ be a quasi-projective variety over $ K $. If $ K $ is non-archimedean, let $ \O_K $ be the valuation ring of $ K $; if $ K $ is archimedean, let $ \O_K=K $. Then Yuan--Zhang defined the group of local adelic divisors $ \hDiv(U/\O_K) $. Let $ U^{\an}=(U/(K,|\cdot|_{\hyb}))^{\an} $ be the Berkovich space of $ U $ over $ (K,|\cdot|_{\hyb}) $. The local version of Theorem \ref{Theorem1} is the following. 
\begin{theorem}\label{Theorem2}{(Theorem \ref{Theorem3.8})}
	The analytification map
	\begin{align*}
	\hDiv(U/\O_K)\to \hDiv(U_{\hyb}^{\an})_{\eqv}\\
	\hDiv(U/\O_K)_b\to \hDiv(U_{\hyb}^{\an})_{\eqv,b}
	\end{align*}
	are homeomorphisms. If the boundary norm on both sides are given by the same boundary divisor, they are isometries. 
	
	Further, under the canonical isomorphism $ \hDiv(U_{\hyb}^{\an})_{\eqv,b} \cong C(U_{\hyb}^{\an})_{\eqv} $ mapping the metrized divisor $ (0,g) $ to the function $ g $, the boundary topology is equivalent to the locally uniform convergence topology on $ U^{\an}_{\hyb} $. 
\end{theorem}
As a result, we get a analytic criterion of adelic line bundles. 

After that, we discuss the results on the toric varieties, and obtain a comparison theorem of toric adelic divisor in \S \ref{Toric}, special adelic divisors in \cite[\S 5]{Son}, and toroidal $ b $-divisors in \cite{BB}. In \S \ref{GlobalMA}, we discuss the intersection theory of adelic divisors and define the global Monge-Amp\`ere measure on $ \widetilde{U}^{b} $ for integrable adelic line bundles which generalizes the Monge-Amp\`ere measure in the complex case, and the Chambert-Loir measure in the non-archimedean case. The last application is a height pairing.

\subsection{Beilinson--Bloch height}

Let $ K $ be any field, and $ X $ be smooth projective over $ K $. Let $ CH^p(X) $ be the Chow group of cycles of codimension $ p $ in $ X $. Take a prime $ \ell\neq \mathrm{char}\ K $, and let $ CH^{i}_{\hom,\ell}(X) $ be the kernel of the $ \ell $-adic class map
\begin{align*}
\mathrm{cl_{\ell}}: CH^p(X)\to H^{2p}_{\et}(X_{\bar{K}},\Q_{\ell}(p)).
\end{align*}

If $ \mathrm{char}\ K=0 $, then we can use Betti cohomology instead, and it is independent of the prime $ \ell $. So we may drop $ \ell $ from the notation.

Now let $ K $ be the function field of a smooth projective curve $ B $ over an algebraically closed field $ k $. In \cite{Bei}, Beilinson introduced an $ \ell $-adic height pairing for Chow group of homologically trivial cycles of complementary codimension
\begin{align*}
CH^p_{\hom}(X)_{\Q} \times CH^q_{\hom}(X)_{\Q} \to \Q_{\ell}
\end{align*}
for $ p+q=n+1 $. He conjectures that this map is actually $ \Q $-valued, and gives a conditional construction in the number field case. 

In our paper, we will give a new pairing. Let $ S $ be quasi-projective variety over the complex field $ \C $, and $ \pi:X\to S $ be a smooth projective morphism of relative dimension $ n $. Let $ CH_{\hom}^p(X/S) $ to be the subgroup of Chow group $ CH^p(X) $ generated by codimension $ p $ cycles on $ X $, flat over $ S $, whose restriction to each fiber $ X_s $ is homologically trivial for each $ s\in S(\C) $. 
\begin{theorem}\label{HeightPairing}{(Theorem \ref{Theorem6.5})}
	There is a height pairing
	\begin{align*}
	CH^p_{\hom}(X/S) \times CH^q_{\hom}(X/S) \to \widetilde{\Pic}(S/\C)
	\end{align*}
	parametrizing the asymptotic height pairing in \cite{BP}. It is the unique pairing satisfying the following property. 
	\begin{enumerate}
		\item It is functorial for pull-back by morphisms $ S'\to S $ with $ S' $ a smooth quasi-projective variety over $ \C $. 
		
		\item If $ \dim S=1 $, it is compatible with Hain's biextension hermitian line bundle in \cite[3.4]{Hai90} and Lear's extension in his unpublished PhD thesis. 
	\end{enumerate}
\end{theorem}
See Theorem \ref{Theorem6.5} for a precise statement. 

Passing to limit on open subsets $ U\subset S $, we get the following version. 

\begin{corollary}\label{HeightPairingGeneric}
	Let $ K $ be a finitely generated field over $ \C $. Let $ X_{\eta} $ be a smooth projective variety over $ K $ of dimension $ n $. Then there is a pairing
	\begin{align*}
	CH^p_{\hom}(X_{\eta}) \times CH^q_{\hom}(X_{\eta}) \to \widetilde{\Pic}(\Spec K/\C)
	\end{align*}
	which is the direct limit of the pairing in \ref{HeightPairing}. 
\end{corollary}

To get a comparison with Beilinson--Bloch height, assume $ \dim S=1 $. Let $ \bar{S} $ be the smooth projective model of $ S $, and $ \bar{\pi}:\bar{X}\to \bar{S} $ be a projective flat morphism with $ \bar{X} $ smooth. Define $ CH^p(\bar{X})^0  $ to be the kernel of the composition
\begin{align*}
CH^p(\bar{X})\to H^{2p}(\bar{X},\Q) \to H^0(\bar{S},R^{2p}\bar{\pi}_*\Q). 
\end{align*}
It also occurs in Beilinson's construction \cite[1.2]{Bei} and is denoted by $ CH^p_{\text{\foreignlanguage{russian}{B}}}(X) $ in \cite[2C]{Kah}.

\begin{theorem}\label{Theorem1.6}{(Theorem \ref{Thm6.7})}
	Suppose that $ Z,W $ are flat cycles of $ X/S $, with homologically trivial fibers and disjoint support, of codimension $ p,q $ with $ p+q=n+1 $, such that one of them, say, $ Z $, admits an extension $ \tilde{Z} \in CH^*(\bar{X})^0 $. In this case their geometric Beilinson--Bloch pairing exists. Locally near a point $ s_0\in \bar{S}\backslash S $, choose a local coordinate, the archimedean Beilinson--Bloch height has the following asymptotic behavior
	\begin{align*}
	\langle Z_s,W_s \rangle_{s,\mathrm{ar}} = -\langle Z, W \rangle_{0,\mathrm{na}} \log |s| +O(1)
	\end{align*}
	for $ s $ close to $ 0 $. 
\end{theorem}
Here $ \langle Z_s,W_s \rangle_{s,\mathrm{ar}} $ means the archimedean Beilinson--Bloch height, and $ \langle Z, W\rangle_{0,\mathrm{na}} $ means the geometric Beilinson Bloch height over the function field $ K=K(S) $ and at the place $ 0 $. 

Combine the two theorems, we get the following. 

\begin{corollary}
	For each $ Z,W\in CH^*_{\hom}(X/S) $ of complementary codimension, the adelic line bundle $ \tilde{L}_{Z,W} $ in Theorem \ref{HeightPairing} has the following property. 
	
	For each curve $ C\subset S $, if $ Z|_{\pi^{-1}(C)} $ satisfies the same condition as in Theorem \ref{Theorem1.6}, then 
	\begin{align*}
	\deg_C  \tilde{L}_{Z,W}|_C = \langle Z|_{\pi^{-1}(C)}, W|_{\pi^{-1}(C)} \rangle_{K(C)}.
	\end{align*}
	Here $ \langle Z|_{\pi^{-1}(C)}, W|_{\pi^{-1}(C)} \rangle_{K(C)} $ is the geometric Beilinson--Bloch height over the function field of one variable $ K(C) $. 
\end{corollary}

We discuss the relationship between our pairings and the one in \cite{Kah}. The main difference is that ours are only defined for flat cycles, which work very well under pull-back maps. In \cite{Kah}, they only get a functorial under finite surjective pull-backs. One reason is that there pairing is essentially a codimension $ 1 $ theory, and it cannot give describe about higher codimension. 

To prove this result, we use Hain's result on Hodge theory to get a hermitian line bundle $ \bar{L} $ on $ S^{\an} $. Then we use our criterion of adelic line bundles to show that $ \bar{L} $ will determine a geometric adelic line bundle $ \tilde{L} $. 

It is natural to ask whether $ \bar{L} $ itself is adelic or not. For Gross-Schoen cycles, it is studied in \cite[3.3.4]{Yua} and  \cite{GZ}. They all conjectured  that the Beilinson--Bloch height in the number field case is parametrized by an adelic line bundle. Our pairing gives its function field analogue, and we believe that it could be generated to number field in the future.

\subsection*{Acknowledgement}
I would like to thank my advisor Xinyi Yuan about conjectures of the Beilinson--Bloch height, and his encouragement for this paper. I would also thank Zhelun Chen for our discussions during the seminars on Kyoto University and „Alexandru Ioan Cuza” University of Iași. Part of this work was highly inspired by his talk. The author would thank Marc Abboud, Ziyang Gao, Ruoyi Guo, and Shouwu Zhang for many helpful communications. 

The research was partially completed while the author was visiting the Institute for Mathematical Sciences, National University of Singapore and „Alexandru Ioan Cuza” University of Iași in 2025. We would like to thank these institutions for their hospitality.

\section{Berkovich spaces}\label{Berkovich spaces}

\subsection{Preliminaries on Berkovich spaces}\label{Preliminaries on Berkovich spaces}

Let $ k $ be a commutative ring with unity $ 1 $ which is complete with respect to a semi-norm $ |\cdot|_{\Ban} $. Such a pair $ (k,|\cdot|_{\Ban}) $ is called a \emph{Banach ring}. 
\subsubsection*{Affine case}
Let $ (k,|\cdot|_{\Ban}) $ be a Banach ring, and $ A $ be a $ k $-algebra. The \emph{Berkovich space} $ \M(A) $ is the topological space of multiplicative semi-norms $ |\cdot|:A\to \R $ whose restriction to $ k $ is bounded by $ |\cdot|_{\Ban} $. For a point $ x\in \M(A) $, the semi-norm is denoted by $ |\cdot|_x $, and for $ f\in A $, the semi-norm $ |f|_x $ is also denoted by $ |f(x)| $, and thus gives a real-valued function $ |f|:\M(A)\to \R $. The topology on $ \M(A) $ is defined as the weakest topology such that for each $ f\in A $, the function $ |f|:\M(A)\to \R $ is continuous. 

Two points $ |\cdot|_x,|\cdot|_y \in\ M(A) $ are called \emph{norm-equivalent} if there is a positive real number $ t\in \R_+ $ such that $ |\cdot|_x=|\cdot|_y^t $. 

We discuss two maps between $ \Spec A $ and $ \M(A) $. 

For each $ x\in \M(A) $, the kernel of the semi-norm $ |\cdot|_x:A\to \R $ is a prime ideal. This induces a continuous \emph{contraction map} 
$$ \rho:\M(A)\to \Spec A $$
sending a point $ x $ to $ \rho(x)=\ker |\cdot|_x $. Then the norm $ |\cdot|_x $ can be viewed as a norm on the residue field at $ \rho(x) $. Let $ H_x $ be the completion of the field $ (\kappa(\rho(x)),|\cdot|_x) $, called the \emph{residue field} at $ x\in \M(A) $. If $ H_x $ is non-archimedean, let $ R_x $ be the valuation ring of $ H_x $. 

The other one is the \emph{inclusion map} $ i:\Spec A\to \M(A) $ sending a point $ x\in \Spec A $ to the point $ (\kappa(x),|\cdot|_0) $, where $ |\cdot|_0 $ is the trivial norm on $ \kappa(x) $, i.e., $ |a|_0 = 1 $ for all non-zero $ a\in \kappa(x) $. This map is not continuous in general. 

If $ f:A\to B $ is a $k$-algebra homomorphism, $ x\in \M(B) $ corresponding to the multiplicative semi-norm $ |\cdot|_x $, then $ |\cdot|_x\circ f:A\to \R $ is a multiplicative semi-norm on $ A $, thus induces a continuous map $ f^*:\M(B)\to \M(A) $.  

In our paper, we only care about the topological structure of Berkovich spaces, so we do not discuss the notion of analytic functions. 

\subsubsection*{Scheme case}

In general, if $ (k,|\cdot|_{\Ban}) $ is a Banach ring, and $ X $ is a scheme over $ k $, then $ X $ can be covered by open affine subschemes $ \{\Spec A_j\} $, and we may glue $ \M(A_j) $ together to get the analytification $ X^{\an} $, which is more rigorously written as $ (X/(k,|\cdot|_{\Ban}))^{\an} $ if we need to emphasize the norm $ |\cdot|_{\Ban} $. If $ f:X\to Y $ is a morphism of schemes, then $ f $ induces a continuous morphism $ f^{\an}:X^{\an}\to Y^{\an} $, thus the analytification map is a functor from the category of schemes over $ k $ to the category of topological spaces. 

Let $ (k,|\cdot|_{\Ban}) $ be a noetherian Banach ring and $ X $ be a scheme of finite type. By \cite[Lemma 1.1, Lemma 1.2]{Ber09}, we have:
\begin{prop}
	(1) The space $ X^{\an} $ is locally compact. 
	
	(1) If $ X $ is separated over $ k $, then $ X^{\an} $ is Hausdorff. 
	
	(2) If $ X $ is proper over $ k $, then $ X^{\an} $ is compact.  
\end{prop}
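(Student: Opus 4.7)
The plan is to establish each assertion by reducing to the Berkovich spectrum $\M(A)$ of an appropriate Banach or $k$-algebra and then exploiting the tautological fact that $\M(R)$ of a commutative Banach ring $R$ is compact Hausdorff (it embeds as a closed subset of the product $\prod_{f \in R} [0, \|f\|]$ with the product topology).

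For (1), I would cover $X$ by finitely many affines $\Spec A_\alpha$ with each $A_\alpha$ finitely generated over $k$, say by $f_1, \ldots, f_n$. Given $x \in \M(A_\alpha)$, the subset $\{y : |f_j(y)| \le r_j \text{ for all } j\}$ with $r_j > |f_j(x)|$ is a neighborhood of $x$, and it is compact because it is the image of the compact Berkovich spectrum $\M(k\{r_1^{-1}T_1, \ldots, r_n^{-1}T_n\})$ of the Banach algebra of power series converging on the polydisk of radii $r_j$, under the continuous surjection induced by the quotient map $T_i \mapsto f_i$. This gives a compact neighborhood base at every point of $X^{\an}$.

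For (2), the key fact is that every $\M(A)$ is already Hausdorff: two distinct semi-norms $x, y$ differ on some $f \in A$, and disjoint open neighborhoods of $|f(x)|$ and $|f(y)|$ in $\R$ pull back to disjoint open neighborhoods of $x$ and $y$. For separated $X$, I would cover by affines $U_\alpha = \Spec A_\alpha$; separatedness ensures that each $U_\alpha \cap U_\beta$ is affine with coordinate ring $A_{\alpha\beta}$, so $X^{\an}$ is the quotient of $\sqcup_\alpha \M(A_\alpha)$ by the relation identifying the two natural copies of $\M(A_{\alpha\beta})$. The subtlety — and the main expected obstacle — is verifying that this gluing equivalence has closed graph; equivalently, that $\M(A_{\alpha\beta})$ embeds as a locally closed subset of $\M(A_\alpha)$ and the induced identification is a topological embedding. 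The Hausdorffness of the quotient then follows from general topology.

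For (3), a projective scheme $X$ admits a closed immersion into some $\P^n_k$, which induces a topological closed embedding $X^{\an} \hookrightarrow (\P^n_k)^{\an}$ (because surjections of finitely generated $k$-algebras correspond to closed subsets of Berkovich spectra). It therefore suffices to show $(\P^n_k)^{\an}$ is compact. I would cover $\P^n_k$ by the $n+1$ standard charts $U_i$ with coordinates $T_j^{(i)} = x_j/x_i$, and inside each $U_i^{\an}$ isolate the compact subset $K_i := \{y : |T_j^{(i)}(y)| \le 1 \text{ for all } j\}$, compact by the argument of (1). Every $x \in (\P^n_k)^{\an}$ lies in some $K_i$: pick any index $i$ for which $|x_i|$ is maximal among the homogeneous coordinates, and then all ratios $|x_j/x_i|$ are at most $1$. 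Thus $(\P^n_k)^{\an} = \bigcup_{i=0}^n K_i$ is a finite union of compact sets, hence compact.
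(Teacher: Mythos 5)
The paper itself gives no argument here --- it simply invokes \cite[Lemmas 1.1, 1.2]{Ber09} --- so the real question is whether your direct proof is complete. Parts (1) and (3) follow the standard Berkovich argument and are essentially right, modulo two slips you should repair. In (1), there is no ``continuous surjection'' from $\M(k\{r_1^{-1}T_1,\ldots,r_n^{-1}T_n\})$ onto the neighborhood: a surjection of rings induces an \emph{injection} of spectra, and there is no ring map $A\to k\{r^{-1}T\}$ at all. The correct statement is that pullback along $k[T_1,\ldots,T_n]\twoheadrightarrow A$, $T_i\mapsto f_i$, identifies $\{y\in\M(A):|f_j(y)|\leqslant r_j\ \forall j\}$ homeomorphically with the \emph{closed} subset of the compact space $\M(k\{r^{-1}T\})$ consisting of semi-norms vanishing on the kernel; compactness follows. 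In (3), the homogeneous coordinates $x_j$ are not functions on $\P^n_k$, so ``choose $i$ with $|x_i|$ maximal'' is not literally meaningful; argue instead with the ratios: if $z$ lies in the chart $U_i$ and $m$ maximizes $|T^{(i)}_m(z)|>1$, then multiplicativity gives $|T^{(m)}_j(z)|=|T^{(i)}_j(z)|/|T^{(i)}_m(z)|\leqslant 1$, so $z\in K_m$. With these fixes (1) and (3) are fine.

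Part (2), however, has a genuine gap: you name the closedness of the gluing relation as ``the main expected obstacle'' and then do not prove it, and the reformulation you offer --- that $\M(A_{\alpha\beta})$ embeds as a locally closed (in fact open) subset of $\M(A_\alpha)$ --- is \emph{not} equivalent to it and makes no use of separatedness. The affine line with doubled origin satisfies exactly that condition, yet its analytification is non-Hausdorff, so the step as written fails. The correct argument: since the gluing is along open subsets, the quotient map $\bigsqcup_\alpha\M(A_\alpha)\to X^{\an}$ is open, and for an open quotient Hausdorffness is equivalent to closedness of the graph of the identification $\M(A_{\alpha\beta})\to\M(A_\alpha)\times\M(A_\beta)$. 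Separatedness enters here because it makes $A_\alpha\otimes_k A_\beta\to A_{\alpha\beta}$ surjective. Concretely, suppose a net $x_i\in\M(A_{\alpha\beta})$ converges to $x$ in $\M(A_\alpha)$ and to $y$ in $\M(A_\beta)$. Writing any $h\in A_{\alpha\beta}$ as a finite sum $\sum_j f_jg_j$ with $f_j\in A_\alpha$, $g_j\in A_\beta$, one gets $|h(x_i)|\leqslant\sum_j|f_j(x_i)|\,|g_j(x_i)|$, which is eventually bounded since the right-hand values converge to $|f_j(x)|\,|g_j(y)|$. Hence, by Tychonoff, a subnet of the semi-norms $|\cdot|_{x_i}$ converges pointwise on $A_{\alpha\beta}$ to a multiplicative semi-norm $z\in\M(A_{\alpha\beta})$ whose restrictions to $A_\alpha$ and $A_\beta$ are $|\cdot|_x$ and $|\cdot|_y$; thus $x$ and $y$ are identified by the gluing and the graph is closed. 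Without some such use of separatedness, (2) is not proved.
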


We see that the contraction maps $ \rho_j:\M(A_j)\to \Spec A_j $ glue to a continuous map
$ \rho:X^{\an}\to X $, and the inclusion maps $ i_j: \Spec A_j \to \M(A_j) $ glue to  $ i:X\to X^{\an} $. The inclusion map $ i:X\to X^{\an} $ is not continuous in general.

\subsubsection*{Reduction map}
For a separated scheme $ X $ over $ k $, we have a reduction map, which is only partially defined on a subset of $ X^{\an} $. Assume that $ k $ is a Banach integral domain, and $ X $ is a noetherian separated scheme over $ k $. Let $ X^{\an} = (X/k)^{\an} $. There is a \emph{reduction map}
\begin{align*}
r:X^{\an}\dashrightarrow X
\end{align*}
which is only defined on a subset of $ X^{\an} $ as follows. 

For each non-archimedean point $ x\in X^{\an} $ such that the restriction of $ |\cdot|_x $ to $ k $ is bounded by the trivial semi-norm on $ k $, the canonical map $ \Spec H_x\to X\to \Spec k $ extends to $ \Spec R_x\to \Spec k $. Since $ X $ is separated, by the valuative criterion, there is at most one $ \phi:\Spec R_x\to X $ such that the diagram 
	\begin{align*}
\xymatrix{\Spec H_x \ar[r]\ar[d] & X \ar[d]\\
	\Spec R_x\ar[r]\ar@{.>}[ru]^{\phi} & \Spec k}
\end{align*}
commutes. If the morphism $ \phi $ does exist, then we say that the reduction map is defined at $ x $, and $ r(x) $ is the image of the closed points of $ \Spec R_x $ in $ X $ under the extension $ \phi $. 

\begin{definition}
	The interior part of $ X^{\an} $, denoted by $ X^{\beth}\subset X^{\an} $, is the set of all points in $ X^{\an} $ such that the reduction map $ r $ is defined at $ x $. The boundary part $ X^b $ is defined as $ X^b=X^{\an}\backslash X^{\beth} $, and the normalized boundary part is $ \widetilde{X}^{b}:= X^b/\!\sim_{\eqv} $. 
\end{definition}

\begin{remark}
	Here we explain the notations. If $ k $ is a field with trivial norm, in \cite[\S 1]{Thu}, Thuillier defined an analytic space $ \mathfrak{X}^{\beth} $ for any locally algebraic formal $ k $-scheme $ \mathfrak{X} $. Then $ {X}^{\beth} $ is the analytification of the formal scheme $ {X} $(which is just the scheme $ X $ with the discrete topology on the structure sheaf). 
	
	Let $ U $ be a quasi-projective integral flat scheme over $ k $, $ X $ be a projective model of $ U $  and let $ \mathfrak{X} $ be the formal completion of $ X $ along $ D=X\backslash U $. Then $ \mathfrak{X}^{\beth}=U^{b}\cup D^{\an} $, and $ U^{b}=\mathfrak{X}_\eta $ is equal to the generic fiber of the formal scheme $ \mathfrak{X} $ in \cite{Thu}. We choose this notation just because it only depends on $ U $, not on the projective model, and we want to emphasize that $ U^{\an} = U^{\beth}\cup U^{b} $. 
\end{remark}

If $ k $ is a noetherian Banach integral domain with the trivial norm, and $ X $ is proper, then the reduction map is defined on the whole space $ X^{\an} $, and the map $ r:X^{\an}\to X $ is anti-continuous \cite[Corollary 2.4.2]{Ber90}, namely, the inverse image of an open subset in $ X $ is closed in $ X^{\an} $. 

Now we state the main theorem we need about the Berkovich spaces. 
\begin{assumption}\label{Assumption}
	In this article, a Banach ring $ (k,|\cdot|_{\Ban}) $ will be one of the following. 
	
	\begin{enumerate}
		\item[(1)] Trivial norm case: $ k $ is any noetherian integral domain with the trivial norm $ |\cdot|_{\Ban} =|\cdot|_0 $.
		
		\item[(2)] Hybrid case: $ k $ is a field with a complete non-trivial valuation $ |\cdot| $, and take the hybrid norm $ |\cdot|_{\Ban} = |\cdot|_{\hyb}=\max\{|\cdot|,|\cdot|_0\} $. 
		
		\item[(3)] Global case: $ k=\Z $, and the norm $ |\cdot|_{\Ban} $ is the standard absolute norm on $ \Z $. 	
	\end{enumerate}
\end{assumption}
Note that in all cases, the trivial norm $ |\cdot|_0 $ is bounded by the Banach norm $ |\cdot|_{\Ban} $

\begin{theorem}\label{ThmBerkovichBoundary}
	Let $ k $ be as in Assumption \ref{Assumption}, and $ U $ be a quasi-projective integral flat scheme over $ k $. Then the topological spaces $ U^{\beth} $, $ \widetilde{U}^{b}$ are both compact Hausdorff. 
\end{theorem}

Before we give the proof of Theorem \ref{ThmBerkovichBoundary}, we give a short introduction of Berkovich spaces over fields with hybrid norms.

\subsection{Hybrid Berkovich spaces}\label{HybridSpace}

Now let $ K $ be a complete field with respect to a non-trivial norm $ |\cdot|_1 $, and let $ |\cdot|_0 $ be the trivial norm on $ K $. Define the \emph{hybrid norm} $ |\cdot|_{\hyb}:=\max\{|\cdot|_1,|\cdot|_0\} $. Then we can check that $ |\cdot|_t:= |\cdot|_1^t $ is a multiplicative norm for each $ t\in (0,1] $, and $ |\cdot|_1^0=|\cdot|_0 $. These norms are all bounded by $ |\cdot|_{\hyb} $, and it induces a continuous injection $ j:[0,1]\to \M(K,|\cdot|_{\hyb}) $. 

\begin{prop}
	The injection $ j: [0,1] \to \M(K,|\cdot|_{\hyb}) $ is a homeomorphism. 
\end{prop}
\begin{proof}
	Let $ |\cdot|_{\lambda} $ be any multiplicative norm on $ K $ which is bounded by $ |\cdot|_{\hyb} $. We will show that there is an element $ t\in [0,1] $ such that $ |\cdot|_{\lambda} = |\cdot|_1^t $. 
	
	If $ |\cdot|_{\lambda} $ is trivial, then we take $ t=0 $. Otherwise, there exist $ \pi\in K $ such that $ |\pi|_{\lambda}>1 $. Then $ |\pi|_{\hyb}\geqslant |\pi|_{\lambda}>1 $, hence $ |\pi|_1>1 $. There exist a unique number $ t\in (0,1] $, such that $ |\pi|_{\lambda} = |\pi|_{1}^t $.  
	
	For any non-zero $ x\in K $, if $ |x|_t\leqslant 1 $, then $ |x|_{\hyb} = 1 $ and hence $ |x|_{\lambda}\leqslant 1 $. Therefore, for any rational number $ a,b\in \Q $ with $ |\pi|_t^a\leqslant |x|_t \leqslant |\pi|_t^b $, we get $ |\pi|_{\lambda}^a\leqslant |x|_{\lambda} \leqslant |\pi|_{\lambda}^b $. But $ |\pi|_{\lambda} = |\pi|_t $, hence $ |x|_t=|x|_{\lambda} $ for all $ x\in K $. 
\end{proof}
To simplify the notations, we write $ K_t $ (resp. $ K_{\hyb} $) to be the Banach field $ K $ with the norm $ |\cdot|_t $ (resp. $ |\cdot|_{\hyb} $). In the followings we will identify $ \M(K_{\hyb}) $ with the closed interval $ [0,1] $ via $ j $. If some statement works for all $ K_t $ and $ K_{\hyb} $, we will write $ K_{\bullet} $ instead.

Now let $ U $ be any quasi-projective variety over $ K $. Then we have the Berkovich space $ (U/K_{\hyb})^{\an} $ with the structure map $ \pi:(U/K_{\hyb})^{\an}\to [0,1] $. The central fiber is $ \pi^{-1}(0)\cong (U/K_0)^{\an} $, and the fiber over $ 1 $ is $ \pi^{-1}(1)\cong (U/K_1)^{\an} $. For other fibers, we have the following results. 

\begin{prop}
	We have an homeomorphism $ \pi^{-1}((0,1]) \cong (U/K_1)^{\an}\times (0,1] $
\end{prop}
The proof of archimedean case is given in \cite{Ber09}. The author implicitly uses the following lemma. The proof of non-archimedean case is the same and even easier. 
\begin{lemma}
	Let $ t\in (0,1] $ be a real number, and $ A $ be an algebra over $ \R $. Let $ \|\cdot\| $ be a multiplicative semi-norm on $ A $ extending $ |\cdot|^t $ on $ \R $. Then $ \|\cdot\|^{1/t} $ is also a semi-norm on $ A $ extending $ |\cdot| $ on $ \R $. 
\end{lemma}
\begin{proof}
	We only need to check the triangle inequality. By direct computation, we get:
	\begin{align*}
	\|x+y\|^n&\leqslant\sum_{i=0}^n \left|{n\choose i}\right|^t \|x\|^i \|y\|^{n-i}\\
	&\leqslant (n+1)\left(\sum_{i=0}^{n} \left|{n\choose i}\right| \|x\|^{\frac{i}{t}} \|y\|^{\frac{n-i}{t}}\right)^t\\
	&=(n+1)\left( \|x\|^{\frac{1}{t}} + \|y\|^{\frac{1}{t}}\right)^{nt}
	\end{align*}
	Take $ nt $-th roots, and let $ n\to \infty $, we get
	\begin{align*}
	\|x+y\|^{\frac{1}{t}}\leqslant \|x\|^{\frac{1}{t}} + \|y\|^{\frac{1}{t}}
	\end{align*}
\end{proof} 

\subsubsection*{Compactifications}

The topological spaces $ (U/K_{\bullet})^{\an} $ with any norm on $ K $ as above are all Hausdorff and locally compact, but it is not compact unless $ U $ is projective over $ K $. For many purpose we need to obtain compact spaces, and hence we define various compactifications of these spaces, especially of $ (U/K_1)^{\an} $. 

The most common one is just given by a compactification of varieties. Namely, given an open immersion of varieties $ U\subset X $ with $ X $ projective, we obtain an compactifications $ (U/K_{\bullet})^{\an}\to (X/K_{\bullet})^{\an} $. Of course it depends on $ X $, so it is not intrinsic. One solution is that we can take all projective compactifications $ X $ and consider the inverse limit $ \displaystyle\lim_{X\supset U} (X/K_{\bullet})^{\an} $, which is a kind of Zariski-Riemann spaces. 

For the analytic space $ (U/K_1)^{\an} $, there is another compactification called hybrid compactification, as in \cite[\S 2]{BJ}. Although they only consider the case of complex manifolds, the essential idea works also for general valuation fields. 

In the case of complex manifold, they add some non-archimedean part as the boundary, usually the dual complex of boundary divisor. In \cite[Definition 4.4]{BJ}, they also consider the inverse limit of all such compactifications. This definition is essentially the same as ours defined later.

In \cite{AN}, they define the hybrid space of higher ranks, namely, the boundary itself contains both archimedean part and the non-archimedean part.

Using hybrid norms, we see that $ (U/K_{\hyb})^{\an} $ contains a canonical subset homeomorphic to $ (U/K_1)^{\an}\times (0,1] $, and thus by Theorem \ref{ThmBerkovichBoundary}, we have the following result. 

\begin{corollary}
	The topological space 
	$$ \widetilde{U}^b:=\left((U/K_{\hyb})^{\an}\backslash U^{\beth}\right)/\!\sim_{\eqv} $$
	is a compactification of $ (U/K_1)^{\an} $. Its boundary is the topological space $ \widetilde{U}_0^{b}$ derived from the Berkovich space $ (U/K_0)^{\an} $ over the trivially-normed field. 
\end{corollary} 

This compactification works for all complete field $ K $ with a non-trivial norm $ |\cdot|_1 $, and all quasi-projective varieties $ U $(not necessarily smooth).

We can also get other non-archimedean compactifications by retracting the boundary. Let $ U $ be a smooth quasi-projective variety with a smooth projective model $ X $, such that $ D=X\backslash U $ is a simple normal crossing divisor. Let $ \Delta(D) $ is the dual complex of $ D $. Thuillier has showed the following results in \cite[Proposition 4.7]{Thu}

\begin{theorem}\label{Theorem2.10}
	There is a canonical subset $ \mathcal{S}(U)^*\subset U_0^b $, homeomorphic to $ \Delta(D)\times (0,\infty) $, such that the topological space $ U_0^b $ retracts to $ \mathcal{S}(U)^* $. 
\end{theorem}

It can be shown in the proof that this retraction map is equivariant. Hence we get a continuous surjective map $ f:\widetilde{U}_0^b\to \Delta(D) $. 

If we take the pushout of the map $ f $ and the canonical injection $ \widetilde{U}_0^b\to \widetilde{U}^b $, then we get a compactification of the complex manifold $ (U/K_1)^{\an} $ with boundary $ \Delta(D) $. This reintroduces the work in \cite[\S 2]{BJ}.

\subsection{Proof of Theorem \ref{ThmBerkovichBoundary}}
First, we show that $ U^{\beth} $ is compact Hausdorff in all cases, and then so is $\widetilde{U}^b $ case by case.

\begin{prop}
	Let $ k $ be as in Assumption \ref{Assumption}. Then the interior part $ U^{\beth} $ is compact Hausdorff. 
\end{prop}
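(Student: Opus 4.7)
The plan is to identify $U^{\beth}$ with a closed subspace of the analytification of a projective compactification, then invoke compactness of the latter. Since $U$ is quasi-projective over $k$, I would take $X$ to be the Zariski closure of $U$ in some projective space $\P^n_k$, so that $X$ is projective over $k$ and $U \hookrightarrow X$ is an open immersion. By the cited proposition, $X^{\an}$ is compact Hausdorff. The induced open immersion $U^{\an} \hookrightarrow X^{\an}$ exhibits $U^{\beth}$ as a subspace of the Hausdorff space $X^{\an}$, so Hausdorffness is immediate, and it suffices to prove $U^{\beth}$ is closed in $X^{\an}$.

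Set $N := \{x \in X^{\an} : |a|_x \leq 1 \text{ for all } a\in k\}$, which is closed in $X^{\an}$ as an intersection of closed sublevel sets of the continuous functions $|a|$. By properness of $X$ and the valuative criterion, each $x \in N$ admits a unique extension $\phi_x: \Spec R_x \to X$ of the natural map $\Spec H_x \to X$, giving a reduction map $r: N \to X$ sending $x$ to the image of the closed point. Since $U \subset X$ is open, and the only points of $\Spec R_x$ are (at most) the closed point and a generization of it, $\phi_x$ factors through $U$ if and only if $r(x) \in U$. This identifies $U^{\beth} = r^{-1}(U)$ inside $N$.

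The technical heart of the argument is to show that $r^{-1}(U)$ is closed in $N$, equivalently that the tube $r^{-1}(Y)$ is open for $Y := X \setminus U$ with its reduced induced structure. I would argue locally: given $x$ with $r(x) \in Y$, choose an affine open $\Spec A \subset X$ containing $r(x)$ and write $Y \cap \Spec A = V(f_1,\ldots,f_n)$. The condition $r(x) \in V(f_i)$ forces each $f_i$ into the maximal ideal of $R_x$, so $|f_i(x)| < 1$; these values make sense because $\phi_x$ factors through $\Spec A$, hence the $f_i$ pull back to $R_x$. Using continuity of the functions $|f_i|$ on the open set $(\Spec A)^{\an} \subset X^{\an}$, one obtains an open neighborhood of $x$ in $X^{\an}$ on which the strict inequalities $|f_i| < 1$ persist, and one checks that any point $y \in N$ in this neighborhood has $r(y) \in Y$. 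This yields openness of $r^{-1}(Y)$, so $U^{\beth}$ is closed in the compact space $X^{\an}$, hence compact.

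The main obstacle is the final verification in the previous paragraph: proving openness of the tube. The subtlety is that $r^{-1}(\Spec A)$ is \emph{not} a priori open in $N$ (the reduction map is anti-continuous, so preimages of affine charts are typically closed loci), and hence nearby points $y$ of $x$ could have their reductions $r(y)$ escape the chosen chart $\Spec A$. One must therefore argue carefully that the strict inequalities $|f_i(y)| < 1$, combined with $y\in N$, both trap $r(y)$ inside $\Spec A$ and force it into $V(f_1,\ldots,f_n) = Y \cap \Spec A$; this requires a local analysis of how $(\Spec A)^{\an}$ sits inside $X^{\an}$ and how strict inequalities on functions control the reduction of nearby points.
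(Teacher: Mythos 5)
Your overall strategy coincides with the paper's: pass to a projective model $X$, observe that the locus $N=\{x\in X^{\an}:|a|_x\leqslant 1\ \forall a\in k\}$ (the paper's $X^{\beth}$) is compact Hausdorff, identify $U^{\beth}=r^{-1}(U)$ for the reduction map $r:N\to X$, and conclude by showing this preimage is closed. The difference is in how that last step is handled, and that is where your proposal has a genuine gap: the closedness of $r^{-1}(U)$, equivalently the openness of the tube $r^{-1}(Y)$ over $Y=X\setminus U$, is exactly the anti-continuity of the reduction map, which the paper does not reprove but imports from Berkovich \cite{Ber90}. You instead sketch a direct local argument and then, in your final paragraph, concede that its crucial verification is unresolved. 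As written, the argument does not close: the condition ``$\phi_x$ factors through $\Spec A$'' forces $|f|_x\leqslant 1$ for \emph{all} $f\in A$, which is a closed condition, so you cannot shrink to a neighborhood of $x$ on which the chart hypotheses persist; for a nearby $y\in N$ whose reduction lies outside $\Spec A$, the inequalities $|f_i(y)|<1$ simply do not control $r(y)$ (the $f_i$ need not lie in, let alone generate, the relevant ideal at $r(y)$). So the implication ``$|f_i|<1$ near $x$ $\Rightarrow$ $r(y)\in Y$'' is precisely what is missing, and your own caveat correctly identifies that it does not follow from continuity of the $|f_i|$ alone.

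The fix is either to do what the paper does and quote the anti-continuity of reduction for proper schemes over a trivially (or boundedly) normed base from \cite{Ber90} (preimages of opens under $r$ are closed, hence $U^{\beth}=r^{-1}(U)$ is closed in the compact space $X^{\beth}$), or to actually prove tube-openness, e.g.\ by covering $X$ by finitely many affine charts and analyzing, for each chart, the locus of points whose reduction lands in that chart, rather than working at a single chart around the given point. Until one of these is supplied, the compactness claim is not established; the Hausdorffness part of your argument is fine.
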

\begin{proof}
	Let $ X $ be a projective model of $ U $, and $ X^{\an} = (X/(k,|\cdot|_{\Ban}))^{\an} $. Since $ X $ is proper over $ k $, $ X^{\beth} $ is just the set of points in $ X^{\an}$ such that $ |a|_x \leqslant 1 $ for all $ a\in k $. Equivalently, we have
	\begin{align*}
	X^{\beth} \cong (X/(k,|\cdot|_0))^{\an},
	\end{align*}
	and hence $ X^{\beth} $ is compact Hausdorff. 	
	
	By \cite{Ber90}, the reduction map $ r:X^\beth \to X $ is anti-continuous, i.e., the inverse image of an open subset is closed. Thus $ U^\beth = r^{-1}(U) $ is a closed subspace of $ X^{\beth} $, hence it is compact Hausdorff. 
\end{proof}

Now we show that $ \widetilde{U}^b $ is compact Hausdorff case by case.

\subsubsection*{Trivial norm case}
In this subsection we prove Theorem \ref{ThmBerkovichBoundary} in the case when $ k $ is any noetherian integral domain with the trivial norm $ |\cdot|_{\Ban} = |\cdot|_0 $. 

Let $ U $ be a quasi-projective integral flat scheme over $ k $. Fix a projective model $ X/k $ of $ U $, and let $ D=X\backslash U $, with the reduced subscheme structure. Then the reduction map $ r:X^{\an}\to X $ is defined on the whole space $ X^{\an} $.

\begin{proof}	
	First we show the Hausdorffness of $ \widetilde{U}^{b} = U^{b}/\!\sim_{\eqv} $.
	
	Let $ x,y\in U^{b} $ be two points which are not norm-equivalent. Then $ r(x),r(y) $ are two points in $ D $(possibly equal). Since $ X $ is projective, there exists an open affine subscheme $ \Spec A\subset X $ containing $ r(x) $ and $ r(y) $. Then $ \Spec A $ also contains $ \rho(x) $ and  $ \rho(y) $, and $ x,y $ give two norm $ |\cdot|_x,|\cdot|_y $ on $ A $. 
	
	Since $ r(x),r(y)\in \Spec A $, we have $ |a|_x\leqslant 1 $, $ |a|_y\leqslant 1 $ for all $ a\in A $. 
	
	Note that $ \rho(y)\notin \overline{\{r(x)\}} $, as $ \rho(y)\notin D $ and $ r(x)\in D $. Thus we have 
	$$ \{a\in A:|a|_x < 1 \} \not\subset \{a\in A:|a|_y = 0\}.  $$
	By prime avoidance, two prime ideals $ \{a\in A:|a|_x= 0 \} $ and $ \{a\in A:|a|_y= 0\} $ cannot cover the ideal $ \{a\in A:|a|_x<1\} $. Thus there exists $ f_x\in A $ such that $ 0<|f_x|_x<1 $ and $ |f_x|_y\neq 0 $. 
	
	Similarly, take $ f_y\in A $ such that $ 0<|f_y|_y<1 $ and $ |f_y|_x\neq 0 $. Let $ f=f_x f_y $. Then we have $ 0<|f|_x<1 $ and $ 0<|f|_y<1 $. 
	
	Since $ x,y $ are not norm-equivalent, switch $ x,y $ if necessary, we may find $ g\in A $, such that
	\begin{align*}
	\frac{\log |g(x)|}{\log |f(x)|}< \frac{\log |g(y)|}{\log |f(y)|}.
	\end{align*}
	Take any $ a\in \R $ such that 
	$$ \frac{\log |g(x)|}{\log |f(x)|}<a< \frac{\log |g(y)|}{\log |f(y)|}, $$
	Define two open subset of $ (\Spec A)^{\an} $ as 
	\begin{align*}
	V_1=\{v\in (\Spec A)^{\an}:f(v)\neq 0,1;\ \frac{\log |g(v)|}{\log |f(v)|}<a \}
	\end{align*} 
	\begin{align*}
	V_2=\{v\in (\Spec A)^{\an}:f(v)\neq 0,1;\ \frac{\log |g(v)|}{\log |f(v)|}>a \}
	\end{align*}
	Here we assume that $ \log 0=-\infty<a $ for all real number $ a $. Then $ V_1 $, $ V_2 $ are two disjoint open subsets of $ (\Spec A)^{\an} $ which are stable under norm-equivalence relation and separate $ x,y $. Thus $ \widetilde{U}^{b} $ is Hausdorff. 
	
	For the compactness of $ \widetilde{U}^{b} $, since $ X^{\an} $ is compact Hausdorff, it is a $ T_4 $-topological space. As $ U^{\beth} $, $ D^{\an} $ are both closed in $ X^{\an} $, there exist disjoint open subsets $ W_1\supset U^{\beth} $ and $ W_2\supset D^{\an} $. Let $ W_3=X^{\an}\backslash(W_1\cup W_2) $, then $ W_3 $ is closed in $ X^{\an} $, and hence compact. 
	
	We show that $ W_3\to U^{b}\to \widetilde{U}^{b} $ is surjective. Given any $ x\in U^{b} $, choose an affine open subset $ \Spec A $ containing $ r(x) $, then $ x $ corresponding to a norm $ |\cdot|_x $ on $ A $.  Define the line $ L:[0,\infty]\to X^{\an} $ by  
	\begin{align*}
	L(t)=
	\begin{cases}
	i(\rho(x)),&\text{if } t=0;\\
	|\cdot|_x^t,&\text{if } t\in (0,\infty);\\
	i(r(x)),&\text{if }t=\infty,
	\end{cases}
	\end{align*}
	where $ i $ is the inclusion map $ X\to X^{\an} $. Then we can check that $ L $ is continuous. Since $ L(0)\in W_1 $ ,and $ L(\infty)\in  W_2 $, and $ [0,\infty] $ is connected, two disjoint open subsets $ L^{-1}(W_1) $ and $ L^{-1}(W_2) $ cannot cover $ [0,\infty] $. Thus there exist $ y\in W_3 $, and $ y $ is norm-equivalent to $ x $. Then $ W_3\to \widetilde{U}^{b} $ is surjective.
	
	Now $ \widetilde{U}^{b} $ is a continuous image of a compact space, and hence it is compact. 
\end{proof}

\subsubsection*{Hybrid norm case}
Now let $ k $ be the field with a non-trivial norm $ |\cdot| $, and consider the hybrid norm $ |\cdot|_{\hyb} $. Let $ U $ be a quasi-projective variety over $ k $, and $ U^{\an}=(U/K_{\hyb})^{\an} $ be the Berkovich space with structure morphism $ \pi:U^{\an}\to\M(k,|\cdot|_{\hyb})\cong  [0,1]  $, and let $ U_t^{\an}=\pi^{-1}(t) $ be the fiber over $ t $.

\begin{proof}Again we first show Hausdorffness. 
	
	Let $ x,y $ be two points in $ U^{b} $ which are not norm-equivalent. If $ \pi(x),\pi(y) $ are both non-zero, it follows that $ U_1^{\an} $ is open in $ U^{b}/\!\sim_{\eqv}$, and $ U_1^{\an} $ is Hausdorff. 
	
	If $ \pi(x)\neq 0 $ and $ \pi(y)=0 $, we fix an projective model $ X $, then the reduction map is defined at $ y $, and $ r(y)\in X\backslash U $. Choose an open affine subset $ \Spec A\subset X $ containing $ \rho(x) $ and $ r(y) $. The two prime ideals $ \{a\in A:|a|_x=0\} $ and $ \{a\in A:|a|_y=0\} $ cannot cover the ideal $ \{a\in A:|a|_y<1\} $. Thus there exists $ f\in A $ with $ |f|_x\neq 0 $ and $ 0<|f|_y<1 $. By multiplying a nonzero $ z\in k $, we may assume that $ |f|_x\neq 0,1 $ and $ |f|_y \neq 0,1 $ hold simultaneously.

	If $ \pi(x)=\pi(y)=0 $, similarly as the trivial norm case, we can also find $ f $ such that $ |f|_x\neq 0,1 $ and $ |f|_y \neq 0,1 $ hold simultaneously. The rest part of the proof are similar as the non-archimedean case. 
	
	To show the compactness, choose a projective model $ X $ and let $ D=X\backslash U $. Let $ Y=D^{\an}\cup X_1^{\an} $, which is closed subset in $ X^{\an} $. Then $ U^{\beth}, Y $ are both compact. Take disjoint open neighborhood $ W_1\supset U^{\beth} $, $ W_2\supset D^{\an} $, and let $ W_3=X^{\an}\backslash (W_1\cup W_2) $. Then similarly $ W_2 $ maps surjectively onto $ \widetilde{U}^{b} $, thus $ \widetilde{U}^{b}  $ is compact. 
\end{proof}

\subsubsection*{Global case}

For the case $ k=\Z $ with the standard absolute norm $ |\cdot| $, note that \begin{align*}
 \M(\Z,|\cdot|_0)\cong \M(\Z,|\cdot|_0) \cup \M(\Q,|\cdot|_{\hyb}),
\end{align*}
so it can be easily deduced by the two cases above.

\section{Adelic divisors on Berkovich spaces}\label{AdelicDivisors}
The adelic divisors and line bundles on quasi-projective varieties were introduced by Yuan--Zhang in \cite[\S 2]{YZ}. We just briefly review their definitions. 

\subsection{Adelic divisors}

Let $ k=\Z $ or a field, and let $ X $ be a projective flat variety over $ k $.

If $ k=\Z $, an \emph{arithmetic divisor} on $ X $ is a pair $ (D,g_D) $, where $ D $ is a Cartier divisor on $ X $, and $ g_D $ is a continuous real-valued conjugation-invariant function on $ (X\backslash D)(\C) $(with respect to the analytic topology), such that if $ D $ is locally defined by $ f $, then $ g_D(x)+\log |f(x)| $ extends continuously to $ D(\C) $. Equivalently, $ g_D $ induces a conjugation-invariant metric on the line bundle $ \O(D) $ over $ X(\C) $, such that the canonical section $ 1\in \O(D) $ has the norm $ \|1\|(x)=e^{-g_D(x)} $. The arithmetic divisor $ (D,g_D) $ is called \emph{effective} if $ D $ is effective and $ g_D $ is non-negative. 

If $ k $ is a field, by abuse of notation, an arithmetic divisor on $ X $ is just a usual divisor on $ X $. The set of arithmetic divisors on $ X $ is denoted by $ \hDiv(X) $. 

Now let $ U $ be a quasi-projective flat variety over $ k $, and let $ X $ be a projective model of $ U $. Define the \emph{group of $ (\Q,\Z) $-divisors} as
\begin{align*}
\hDiv(X,U):=\hDiv(X)_\Q \times_{\Div(U)_\Q} \Div(U).
\end{align*}
Thus a $ (\Q,\Z) $-divisor $ (\bar{D},D') $ consists of an arithmetic $ \Q $-divisor $ \bar{D}\in \hDiv(X)_\Q $ and a usual divisor $ D'\in \Div(U) $ such that their image in $ \hDiv(U)_\Q $ are the same. By abuse of notations, we just write $ \bar{D}\in \hDiv(X,U) $, and call $ D' $ the integral part of $ \bar{D} $. 

The \emph{group of model divisors} on $ U $ is defined as 
\begin{align*}
\hDiv(U)_{\mod}:= \lim_{\substack{\longrightarrow\\X}} \hDiv(X,U),
\end{align*}
where $ X $ ranges over all projective model of $ U $, and the morphisms are pull-back of divisors. 

There is a boundary norm on $ \hDiv(U)_{\mod} $, which more rigorously is an extended norm, i.e. it allows that $ |D|=\infty $ for some $ D $. Choose a fixed projective model $ X $ such that $ Z=X\backslash U $ is a divisor. If $ k=\Z $, we further choose a positive Green function $ g_Z $, and let $ \bar{Z}= (Z,g_Z) $. Let $ \bar{D}\in \hDiv(U)_{\mod} $. The \emph{boundary norm} $ \|\cdot\|_{\bar{Z}} $ is defined as 
\begin{align*}
\|\bar{D}\|_{\bar{Z}} = \inf\{ \varepsilon\in \Q_{>0}:-\varepsilon \bar{Z} \leqslant \bar{D}\leqslant \varepsilon \bar{Z} \},
\end{align*}
where the partial order is given by effectivity of divisors. If the integral part of $ \bar{D} $ is non-zero, such an $ \varepsilon $ does not exist, and we assume that $ \inf \emptyset =\infty $, thus $ \|\bar{D}\|_{\bar{\Z}} = \infty $ in this case.  

Different boundary divisors give different boundary norms on $ \hDiv(U)_{\mod} $, but they all induce the same boundary topology. The \emph{group of adelic divisor} $ \hDiv(U/k) $ is defined as the completion of $ \hDiv(U)_{\mod} $ with respect to any boundary norm. 

Let $ \bar{D}\in \hDiv(U/k) $. Then by definition, $ \bar{D} $ is represented by a sequence $ \bar{D}_i\in \hDiv(X_i) $, where $ X_i $ are projective models of $ U $. We may assume that if $ j>i $, then there is a morphism $ \pi_{ji}:X_j\to X_i $ compatible with open immersions $ U\subset X_i $, and $ (X_0,\bar{Z} = \bar{D}_0) $ is a boundary divisor defining the boundary norm. The sequence $ \bar{D}_i $ satisfies the Cauchy condition that there is a sequence $ \epsilon_i\in \Q_+ $ convergent to $ 0 $ such that
\begin{align*}
\epsilon_i \pi_{j0}^*\bar{Z} \leqslant \pi_{ji}^*\bar{D}_i-\bar{D}_j\leqslant \varepsilon_i \pi_{j0}^*\bar{Z}
\end{align*}
for all $ j>i $. In particular, $ D_i|_U $ are the same for all $ i $, and is denoted by $ D|_U $, called the integral part of $ \bar{D} $. 

Let $ \hDiv(U/k)_b\subset \hDiv(U/k) $ be the space of adelic divisors whose integral part is zero. These divisors are also called boundary divisors sometimes, but it is different with the one defining the boundary norm. Hopefully it won't cause confusions.

\subsection{Analytification of Adelic Divisors}
Let $ k $ be a commutative Banach ring which is also an integral domain, and $ X $ is an integral scheme over $ k $. An \emph{metrized divisor} (which is called arithmetic divisor in \cite[\S 3.3]{YZ}) $ \bar{D} $ on $ X^{\an} $ is a pair $ (D,g_D) $, where $ D $ is a usual divisor on $ X $, and $ g_D $ is a continuous function on $ X^{\an}\backslash D^{\an} $, such that if $ D $ is locally defined by a function $ f $, then $ g_D(x)+\log |f(x)| $ extends continuously to $ D^{\an} $. 

The metrized divisor $ \bar{D} $ is called \emph{norm-equivariant} if for any $ x_1,x_2\in X^{\an}\backslash D^{\an} $ satisfying $ |\cdot|_{x_1} = |\cdot|^t_{x_2} $ for some $ 0<t<\infty $, we have that $ g(x_1)=tg(x_2) $. 

Let $ \hDiv(X^{\an})_{\eqv} $ be the group of norm-equivariant arithmetic divisors. Yuan--Zhang defined an analytification map of adelic divisors. 

Let $ k $ be either $ \Z $ or a field. If $ k=\Z $, then $ |\cdot|_{\Ban} $ is the usual absolute norm on $ \Z $. If $ k $ is a field, then $ |\cdot|_{\Ban} $ is the trivial norm on $ k $. 

\begin{prop}\cite[Prop. 3.3.1]{YZ}
	Let $ X $ be a flat and quasi-projective integral scheme over $ k $, and let $ X^{\an}=(X/(k,|\cdot|_{\Ban}))^{\an} $. Then there is a canonical injection map
	\begin{align*}
	\hDiv(X/k)\to \hDiv(X^{\an})_{\eqv}.
	\end{align*}
\end{prop}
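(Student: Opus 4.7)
The plan is to build the map in three stages: first at the level of a fixed projective compactification $\bar X$ of $X$, then on $\hDiv(X)_{\mod}$ via compatibility with pullbacks, and finally on the completion $\hDiv(X/k)$ by a uniform continuity argument. Injectivity will then fall out of the same quantitative estimate used to extend to the completion.

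At the model level, given $\bar D = (D, g_D) \in \hDiv(\bar X, X)$, the divisor part of its analytification is the restriction $D|_X \in \Div(X)$, and the Green function on $X^{\an}\backslash (D|_X)^{\an}$ is assembled piecewise. At archimedean points (only when $k = \Z$) I use the given $g_D$ on $(\bar X\backslash D)(\C)$. At non-archimedean points $x \in \bar X^{\beth}$ I use the model Green function: if $D$ is locally defined by $f \in K(\bar X)^{\times}$ near $r(x)$, set $g_D(x) = -\log |f(x)|$. This is independent of $f$ because two local equations differ by a unit of $\O_{\bar X, r(x)}$ and such a unit has norm $1$ at $x$. At boundary points $y \in X^b$, the value is forced by norm-equivariance: writing $|\cdot|_y = |\cdot|_{y_0}^{t}$ for some $t > 0$ and $y_0$ in a fixed norm-equivalence slice, set $g_D(y) = t\, g_D(y_0)$. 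The required compatibility that $g_D + \log |f|$ extends continuously to $D^{\an}$ then holds trivially at non-archimedean points (the sum vanishes identically) and by hypothesis at archimedean ones.

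This construction is compatible with pullback along birational morphisms of projective models, so it descends to a well-defined map on $\hDiv(X)_{\mod}$. To extend to the completion I use the pointwise bound that $-\varepsilon \bar Z \leq \bar D \leq \varepsilon \bar Z$ implies $|g_{\bar D}| \leq \varepsilon\, g_{\bar Z}$ on $X^{\an}$, a direct consequence of the definition of effectivity of arithmetic divisors. The function $g_{\bar Z}$ is continuous and norm-equivariant on $X^{\an}$, and since both $X^{\beth}$ and $\widetilde X^b$ are compact by Theorem~\ref{Theorem2.4}, it is bounded on each. A Cauchy sequence of model divisors therefore produces a sequence of Green functions converging uniformly on $X^{\beth}$ and on any norm-equivalence slice of $X^b$; the uniform limit is continuous, norm-equivariant, and preserves the local extension property at $D^{\an}$.

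For injectivity, if $\bar D \in \hDiv(X/k)$ has trivial analytification, then any Cauchy representative $\bar D_i \in \hDiv(X)_{\mod}$ has Green functions tending uniformly to $0$ on the compacta above, forcing $\|\bar D_i\|_{\bar Z}\to 0$ by the same pointwise bound, hence $\bar D = 0$. The hardest step in the plan is the construction in the second paragraph: I have to verify that the piecewise-defined Green function is globally continuous, especially where the archimedean and non-archimedean regimes meet when $k = \Z$ and as $x$ approaches $(D|_X)^{\an}$, and that the norm-equivariance extension to $X^b$ meshes continuously with $X^{\beth}$ at the trivial-valuation limit. The natural approach is to patch on affine charts using local trivialisations of $\O(D)$ that are simultaneously compatible with the archimedean metric $e^{-g_D}$ and the model metric on the non-archimedean side, combined with the local description of $X^b$ furnished by Theorem~\ref{Theorem2.4}.
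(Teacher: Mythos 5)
Your construction of the map itself is essentially the route the paper takes (the paper in fact just cites \cite[Prop. 3.3.1]{YZ} and sketches the construction): on a projective model $\bar{X}$, define the Green function at non-archimedean points by $-\log|f(x)|$ for a local equation $f$ of the model divisor near the reduction point, use the given archimedean Green function on the $t=1$ fibre, extend by norm-equivariance in $t$, pass to $\hDiv(X)_{\mod}$ by compatibility with pullback, and complete using the pointwise estimate against $g_{\bar{Z}}$; the locally uniform convergence argument for the completion is fine in outline. Two small wobbles: your separate clause for ``boundary points $y\in X^{b}$'' is circular as written, since you have not yet defined $g_D$ on a slice of $X^{b}$ --- in fact no extra clause is needed at non-archimedean boundary points, because they lie in $\bar{X}^{\beth}$ (the reduction to the projective model is defined there) and are already covered by the model formula, while the equivariance extension is only needed for archimedean points off the $t=1$ fibre; and the statement allows $X$ essentially quasi-projective, so strictly one needs a further limit over models beyond a single compactification, as in \cite{YZ}.

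The genuine gap is in the injectivity step. The pointwise bound you establish goes only one way: effectivity $-\varepsilon\bar{Z}\leqslant\bar{D}\leqslant\varepsilon\bar{Z}$ of model divisors implies $|g_{\bar{D}}|\leqslant\varepsilon\, g_{\bar{Z}}$ on $X^{\an}$ (easy, since local equations of effective divisors are regular at the reduction point and hence have norm at most $1$, plus archimedean positivity). Injectivity needs the converse: from $|g_{\bar{D}_i}|\leqslant\varepsilon_i\, g_{\bar{Z}}$ as functions on $X^{\an}$ you must deduce $-\varepsilon_i\bar{Z}\leqslant\bar{D}_i\leqslant\varepsilon_i\bar{Z}$ in terms of effectivity, i.e. $\|\bar{D}_i\|_{\bar{Z}}\leqslant\varepsilon_i$. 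This is not ``the same pointwise bound''; it is precisely \cite[Lemma 3.3.3]{YZ}, the very lemma this paper invokes in its proof of Theorem \ref{Theorem3.2} when converting functional estimates back into effectivity estimates. It is true and not deep --- on the non-archimedean side one evaluates the Green function at the divisorial points attached to the prime components of the relevant divisors, which recovers their multiplicities, and at the archimedean place the inequality of Green functions is literally the effectivity condition --- but it must be stated and proved (or cited), since without it the map could a priori annihilate a nonzero adelic divisor whose Green functions tend to zero while its boundary norms do not.
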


We briefly discuss their construction. First assume that $ X $ is projective. If $ k $ is a field, then there is a reduction map $ r: X^{\an}\to X $. Let $ D $ be a usual divisor on $ X $, and let $ x\in X^{\an} $. Take any open affine neighborhood $ U=\Spec A $ of $ r(x) $, such that $ D|_U $ is locally defined by $ f\in k(U)^* $. Define $ \tilde{g}(x)=-\log |f(x)| $. It is independent of the choice $ (U,f) $. Then $ (D,\tilde{g}) $ is the analytification of $ D\in \hDiv(X/k) $. 

If $ k=\Z $, let $ \bar{D}=(D,g)\in \hDiv(X) $. The analytification $ (D,\tilde{g}) $ is defined as follows. The restriction of $ \tilde{g} $ to the non-archimedean part of $ X^{\an} $ are defined in the same way as above. The archimedean part of $ X^{\an} $ is homeomorphic to $ X(\C)\times (0,1] $ modulo complex conjugaion, thus the restriction of $ \tilde{g} $ to the archimedean part is uniquely determined by $ g $. 

Let $ X $ be a fixed projective model of $ U $, such that $ D=X\backslash U $ is a divisor. Let $ \bar{D} $ be a boundary divisor defining the boundary norm, which induces a Green function $ g_{\bar{D}} $ on $ U^{\an} $. By the above construction, we have the following result.

\begin{proposition}
	For any $ x\in U^{\an} $, we have that $ g_{\bar{D}}(x)\geqslant 0 $, and $ g_{\bar{D}}(x)=0 $ if and only if $ x\in U^{\beth} $. 
\end{proposition}
In general, if $ U $ is quasi-projective, one may take limits. See \cite[\S 3]{YZ} for details.

Using Green functions on $ U^{\an} $, we can get another topological space which is homeomorphic to $ \widetilde{U}^b $. Roughly, instead of taking quotient, we choose a representative in each norm-equivalent class according the value of Green functions. Let $ U $ be quasi-projective variety over $ k $, $ X $ be a projective model of $ U $, and $ \bar{D} $ is a boundary divisor defining the boundary norm. 

\begin{definition}
	Let $ X,U, \bar{D} $ as above. The subspace $ \delta_{\bar{D}}(U) $ be the subspace of $ U^{\an} $ defined as follows. If $ k $ is a field, then 
	\begin{align*}
	\delta_{D}(U)=\{x\in U^{\an}:g_D(x)=1\}. 
	\end{align*}
	If $ k=\Z $, then it has an extra subset on the archimedean part
	\begin{align*}
	\delta_{\bar{D}} (U) =& \{ x\in U^{b}:g_{\bar{D}}(x)=1\}\\ &\cup \{x\in U^{b}: \max_{y\sim_{\eqv} x}{g_{\bar{D}}(y)} \leqslant 1, \text{ and } g_{\bar{D}} (x)  = \max_{y\sim_{\eqv} x}{g_{\bar{D}}(y)}\}. 
	\end{align*}
\end{definition}

\begin{proposition}
	The space $ \delta_{\bar{D}} (U) $ is compact Hausdorff.
\end{proposition} 

\begin{proof}
	If $ D $ is locally defined by a function $ f $, then $ g_{\bar{D}}(x) +\log |f(x)| $ extends continuously to $ D^{\an} $. Thus $ g_{\bar{D}}:U^{\an}\to \R $ extends continuously to $ \bar{g}_{\bar{D}}:X^{\an}\to \R\cup \{\infty\} $, and the space 
	\begin{align*}
	\{ x\in U^{b}:g_{\bar{D}}(x)=1\} = \bar{g}_{\bar{D}}^{-1}(1)
	\end{align*}
	is compact Hausdorff. The subspace $ \{x\in U^{b}:  g_{\bar{D}} (x)  = \displaystyle\max_{y\sim_{\eqv} x}{g_{\bar{D}}(y)}\leqslant 1 \} $ is homeomorphic to the space
	\begin{align*}
	\{x\in U/(\C,|\cdot|)^{\an}:g_{\bar{D}}(x)\leqslant 1\},
	\end{align*}
	which is also compact Hausdorff.
\end{proof}

\begin{corollary}
	The composition
	\begin{align*}
	\delta_{\bar{D}}(U)\to U^{b}\to \widetilde{U}^{b}
	\end{align*}
	is a homeomorphism.
\end{corollary}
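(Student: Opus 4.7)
The plan is to reduce the corollary to the general fact that a continuous bijection from a compact space to a Hausdorff space is a homeomorphism. The composition is automatically continuous, being the inclusion $\delta_{\bar{D}}(U)\hookrightarrow U^{b}$ followed by the quotient map $U^{b}\to\widetilde{U}^{b}$. The source $\delta_{\bar{D}}(U)$ is compact by Theorem \ref{TheoremD}, and the target $\widetilde{U}^{b}$ is Hausdorff by Theorem \ref{Theorem2.4}. Hence it suffices to verify that $\delta_{\bar{D}}(U)$ meets each norm-equivalence class in $U^{b}$ in exactly one point.

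For surjectivity, let $x\in U^{b}$; the preceding lemma gives $g_{\bar{D}}(x)>0$. If $x$ is non-archimedean, the orbit $\{|\cdot|_x^s\}_{s\in(0,\infty)}$ is well-defined because scaling preserves the trivial bound on the base, and norm-equivariance $g_{\bar{D}}(|\cdot|_x^s)=s\,g_{\bar{D}}(x)$ produces the unique $s=1/g_{\bar{D}}(x)$ landing in the first piece $\{g_{\bar{D}}=1\}$. If $x$ is archimedean, the constraint $|\cdot|_{\Ban}=\max\{|\cdot|,|\cdot|_0\}$ restricts the orbit to points whose restriction to $\C$ has the form $|\cdot|^u$ for $u\in(0,1]$; along this orbit $g_{\bar{D}}$ takes the values $u\,g_{\bar{D}}(x_1)$, where $x_1$ is the unique archimedean representative with $u=1$. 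If $g_{\bar{D}}(x_1)\geqslant 1$ choose $u=1/g_{\bar{D}}(x_1)\in(0,1]$ to land in the first piece; otherwise $x_1$ itself lies in the second piece.

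For injectivity, suppose $x,y\in\delta_{\bar{D}}(U)$ are norm-equivalent, so $|\cdot|_y=|\cdot|_x^t$ for some $t>0$. If both lie in the first piece, then $g_{\bar{D}}(x)=g_{\bar{D}}(y)=1$ and norm-equivariance forces $t=1$, whence $x=y$ as points of the Berkovich space. If both lie in the second piece, both are the archimedean $u=1$ representative in their common orbit, so again $x=y$. The mixed case is excluded, since a second-piece element has orbit-maximum strictly less than $1$ while a companion in the first piece would attain value $1$ on the same orbit, a contradiction. The only delicate bookkeeping is in the archimedean case, where one must distinguish the two sub-cases $\max_{y\sim_{\eqv} x}g_{\bar{D}}(y)\geqslant 1$ and $<1$; this is precisely what the two-piece definition of $\delta_{\bar{D}}(U)$ is designed to accommodate.
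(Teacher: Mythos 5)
Your proof is correct and takes essentially the same route as the paper: the paper's entire argument is that the composition is a continuous bijection from a compact space (Theorem \ref{TheoremD}) onto a Hausdorff space (Theorem \ref{Theorem2.4}), and you simply supply the bijectivity verification that the paper leaves implicit. One small phrasing slip: a second-piece point need not have orbit-maximum \emph{strictly} less than $1$ (it may equal $1$), but in that boundary case the point also lies in the first piece and your first-case argument forces $t=1$, so injectivity is unaffected.
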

\begin{proof}
	It is a continuous bijection from a compact space to a Hausdorff space, hence it is a homeomorphism. 
\end{proof}

\begin{corollary}
	If $ (k,|\cdot|_{\Ban}) $ is non-archimedean, then we have homeomorphism
	\begin{align*}
	U^b \cong \widetilde{U}^b\times \R
	\end{align*}
\end{corollary}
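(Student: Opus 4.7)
By the previous corollary, $\delta_{\bar{D}}(U)\cong \widetilde{U}^{b}$, so it suffices to construct a homeomorphism $\Phi:\delta_{\bar{D}}(U)\times \R\to U^{b}$. In the non-archimedean case every $x\in U^{b}$ is a genuine multiplicative norm on the local ring, and $\sim_{\eqv}$ is exactly the $(0,\infty)$-action $|\cdot|_x\mapsto |\cdot|_x^{s}$. Since $g_{\bar{D}}$ is positive on $U^{b}$ and satisfies $g_{\bar{D}}(|\cdot|_x^{s})=s\,g_{\bar{D}}(x)$ by norm-equivariance, each equivalence class meets $\delta_{\bar{D}}(U)=\{g_{\bar{D}}=1\}$ in exactly one point. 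The plan is to define
\[ \Phi(x,t)=|\cdot|_x^{\,e^{t}}, \qquad (x,t)\in \delta_{\bar{D}}(U)\times \R, \]
and its candidate inverse
\[ \Psi(y)=\Bigl(|\cdot|_y^{\,1/g_{\bar{D}}(y)},\ \log g_{\bar{D}}(y)\Bigr),\qquad y\in U^{b}. \]

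First I would verify that $\Phi$ and $\Psi$ are mutually inverse bijections. That $\Psi(y)$ lies in $\delta_{\bar{D}}(U)\times \R$ uses norm-equivariance: $g_{\bar{D}}(|\cdot|_y^{1/g_{\bar{D}}(y)})=g_{\bar{D}}(y)/g_{\bar{D}}(y)=1$. That $\Phi\circ\Psi=\mathrm{id}$ and $\Psi\circ\Phi=\mathrm{id}$ are then straightforward from the definitions and the uniqueness of the representative with $g_{\bar{D}}=1$.

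Next I would check continuity of $\Phi$. Working locally on an affine open $\Spec A\subset X$ containing a given point of $D$, the weak topology on $U^{\an}\cap (\Spec A)^{\an}$ is generated by the maps $z\mapsto |a|_z$ for $a\in A$. For such $a$ the composition $(x,t)\mapsto |a|_{\Phi(x,t)}=|a|_x^{e^{t}}$ is jointly continuous in $(x,t)$, giving continuity of $\Phi$. For continuity of $\Psi$, the second coordinate $y\mapsto \log g_{\bar{D}}(y)$ is continuous on $U^{b}$ because $g_{\bar{D}}$ is continuous and strictly positive there (using the previous lemma, which gives $g_{\bar{D}}>0$ exactly on $U^{b}$). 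For the first coordinate, it is enough to show that $y\mapsto |a|_{y}^{1/g_{\bar{D}}(y)}$ is continuous for each $a\in A$, which again follows from continuity of $|a|_{\cdot}$ and of $1/g_{\bar{D}}$.

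The main potential obstacle is the first-coordinate continuity of $\Psi$: one has to be a little careful when $|a|_y=0$, because then $|a|_y^{1/g_{\bar{D}}(y)}=0$ for all nearby $y$ with $|a|_y=0$ while the exponent $1/g_{\bar{D}}$ may be unbounded; but since $|a|_y^{s}$ is continuous in $(y,s)\in U^{b}\times(0,\infty)$ when we interpret $0^{s}=0$, and $y\mapsto (y,1/g_{\bar{D}}(y))$ is continuous into $U^{b}\times (0,\infty)$, this is harmless. Apart from this bookkeeping, everything reduces to the universal property of the weak topology, so $\Phi$ is a homeomorphism and composing with $\delta_{\bar{D}}(U)\cong \widetilde{U}^{b}$ yields $U^{b}\cong \widetilde{U}^{b}\times \R$.
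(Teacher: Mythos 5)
Your proposal is correct and is essentially the argument the paper intends: the corollary is stated without proof, but the implicit argument (normalize each orbit by the Green function $g_{\bar{D}}$ to land in $\delta_{\bar{D}}(U)=\{g_{\bar{D}}=1\}$, use the scaling action $|\cdot|_x\mapsto|\cdot|_x^{t}$, and check both directions of continuity through the weak topology on affine charts, exactly as in the paper's proof sketch of the product structure $U^{b}\cong\delta_{\bar{D}}(U)\times(0,\infty)$) is what you wrote, with $(0,\infty)\cong\R$ via $\log$. Your extra care about joint continuity of $(y,s)\mapsto|a|_y^{s}$ at $|a|_y=0$ is a reasonable refinement of the same approach, not a departure from it.
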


The space $ \delta_{\bar{D}}(U) $ is easier to study, but it depends on the choice of the boundary divisor $ \bar{D} $. The space $ \widetilde{U}^{b} $ is intrinsic. However, a Green function $ g_{\bar{D}} $ does not induce any reasonable function on $ \widetilde{U}^{b} $. We can only consider the function $ g_{\bar{D}_1}/g_{\bar{D}_2} $, so we still need to choose a boundary divisor.

Our main theorem is as follows.
\begin{theorem}\label{Theorem3.7}
	Let $ k $ be either $ \Z $ or a field. Let $ U $ be a flat quasi-projective integral scheme over $ k $. Then the above canonical maps
	\begin{align*}
	\hDiv(U/k)\to \hDiv(U^{\an})_{\eqv},\\
	\hDiv(U/k)_b\to \hDiv(U^{\an})_{\eqv,b} \cong C(U^{\an})_{\eqv}
	\end{align*}
	are homeomorphisms, where $ C(U^{\an})_{\eqv} $ is equipped with the locally uniform convergence, and all other groups are equipped with boundary topology. 
\end{theorem}

We give a quick proof of the homeomorphism $ \hDiv(U^{\an})_{\eqv,b} \to C(U^{\an})_{\eqv} $. 

\begin{prop}
	Fix a boundary divisor $ \bar{Z} $ defining the boundary norm with Green function $ g_Z $. Let $ \delta_{\bar{Z}}(U) $ as before. Then the restriction map $ g\mapsto g|_{\delta_{\bar{Z}}(U)} $ gives a homeomorphism 
	\begin{align*}
	C(U^{\an})_{\eqv} \to C(\delta_{\bar{Z}}(U)). 
	\end{align*}
	The sup norm $ C(\delta_{\bar{Z}}(U)) $ is isometric to the boundary norm on $ \hDiv(U^{\an})_{\eqv,b} $ with respect to $ \bar{Z} $, and is equivalent to the locally uniform convergence topology on $ C(U^{\an})_{\eqv} $
\end{prop}
\begin{proof}
	We construct the inverse map. Let $ g $ be any continuous function on $ \delta_{\bar{Z}}(U) $. We define the extension $ \tilde{g} $ on $ U^{\an} $ as follows. For $ x\in U^{\beth} $, define $ \tilde{g}(x)=0 $. For $ x\in U^b $, there is a unique point $ x'\in \delta_{\bar{Z}} (U) $ such that $ x'\sim_{\eqv } x $. We can extend $ \tilde{g} $ equivariantly to $ U^b $. 
	
	Now we check the continuity of $ \tilde{g} $. Since $ U^b $ is open, it suffices to check at $ x\in U^b $. Now $ \tilde{g}(x) = g_{\bar{Z}}(x)=0 $, and $ \tilde{g} $ is bounded by $ g_{\bar{Z}} $ up to constant, we get the continuity. 
	
	To check that our extension is unique, just note that if $ x\in U^{\beth} $, then for all $ t\in (0,\infty) $, $ |\cdot|_{x}^t\in U^{\beth} $, and $ U^{\beth} $ is compact. Thus any continuous equivariant function on $ U^{\an} $ vanishes on $ U^{\beth} $. 
	
	It's clear that the sup norm $ C(\delta_{\bar{Z}}(U)) $ is isometric to the boundary norm on $ \hDiv(U^{\an})_{\eqv,b} $ with respect to $ \bar{Z} $. Now we compare the sup norm on $ C(\delta_{\bar{Z}}(U)) $ with the locally uniform convergence topology on $ C(U^{\an})_{\eqv} $. Since $ \delta_{\bar{Z}(U)} $ is compact, for $ g_i\to g $ locally uniformly in $ C(U^{\an})_{\eqv} $, it converges uniformly on $ \delta_{\bar{Z}} $. On the other hand, if $ g_i\to g $ in $ C(\delta_{\bar{Z}}) $, let $ \tilde{g}_i,\tilde{g} $ be their equivariant extension on $ U^{\an} $, we get
	\begin{align*}
	|\tilde{g}_i (x) - \tilde{g}(x)|\leqslant \left( \sup_{y\in \delta_{\bar{Z}}} |g_i(y) - g(y)| \right) g_{\bar{Z}}(x). 
	\end{align*}
	The first term $ \sup_{y\in \delta_{\bar{Z}}} |g_i(y) - g(y)| $  converges to $ 0 $ as $ i\to \infty $, and the second term $ g_{\bar{Z}}(x) $ is locally bounded in $ U^{\an} $. Thus $ \tilde{g}_i $ is locally uniformly convergent to $ \tilde{g} $. 	
\end{proof}

\begin{proof}
	We fix a boundary divisor $ \bar{Z} $ defining the boundary norm with Green function $ g_Z $. Then the boundary norm on $ C(U^{\an}_{\eqv}) $ will be
	\begin{align*}
	\|g\|_{\bar{Z}} = \inf_{a>0}\{ -a g_Z\leqslant g\leqslant g_Z \}. 
	\end{align*}
\end{proof}

Let $ K $ be a field complete with respect to a non-trivial discrete valuation $ |\cdot| $. If $ K $ is non-archimedean, let $ O_K $ be the valuation ring of $ K $. If $ K $ is archimedean, let $ O_K=K $. Let $ U $ be a quasi-projective variety over $ K $. 

Yuan and Zhang also define the group of adelic divisors $ \hDiv(U/\O_k) $ in the this local case, and define injections $ \hDiv(U/\O_K)\to \hDiv((U/(K,|\cdot|))^{\an}) $. However, to get a similar results as in the global case, we need a different analytification using hybrid norms.

Let $ (K,|\cdot|) $ as above, and let $ |\cdot|_0 $ be the trivial norm on $ K $. Define the hybrid norm $ |\cdot|_{\hyb} = \max\{|\cdot|,|\cdot|_0 \} $ on $ K $, and let $ U^{\an}=(U/(K,|\cdot|_{\hyb}))^{\an} $. Similarly as in the global case, there is a natural analytification map $ \hDiv(U/\O_K) \to \hDiv(U^{\an})_{\eqv} $. 

\begin{theorem}\label{Theorem3.8}
	Notations are as above. Then the natural map $  \hDiv(U/\O_K) \to \hDiv(U^{\an})_{\eqv} $ is an isomorphism.  
\end{theorem}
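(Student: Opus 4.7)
The plan is to imitate the proof of Theorem \ref{Theorem3.2}, since the structural analysis of $\widetilde{U}^{b}$ carried out in Section \ref{BerkovichSpaces} applies directly to the Banach ring $(K,|\cdot|_{\Ban})$. Injectivity follows as in the global case: if a Cauchy sequence in $\hDiv(U)_{\mod}$ analytifies to the zero norm-equivariant Green function on $U^{\an}$, then on the compact space $\widetilde{U}^{b}$ the induced continuous functions tend to zero uniformly, forcing the boundary seminorms on the model divisors themselves to tend to zero, so the limiting adelic divisor is zero.

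For surjectivity I would first strip off the integral part. Given $(D,g_D)\in \hDiv(U^{\an})_{\eqv}$, the divisor $D\in\Div(U)$ extends to a Cartier divisor on some projective $\O_K$-model $X$ of $U$, whose analytification lies in the image of $\hDiv(U/\O_K)$. Subtracting reduces the theorem to proving bijectivity of
\begin{align*}
\hDiv(U/\O_K)_{b}\to C(\widetilde{U}^{b}),
\end{align*}
where I use the homeomorphism $\delta_{\bar{Z}}(U)\cong \widetilde{U}^{b}$ from the corollary above to convert norm-equivariant continuous functions on $U^{\an}$ with trivial integral part into genuine continuous functions on $\widetilde{U}^{b}$.

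The heart of the argument is the density of model Green functions inside $C(\widetilde{U}^{b})$, which I would obtain via the Stone--Weierstrass theorem. Compact Hausdorffness of $\widetilde{U}^{b}$ is Theorem \ref{Theorem2.4}. The constants are accounted for by scalar multiples of the boundary divisor $\bar{Z}$, and point separation transcribes the prime-avoidance argument from the proof of Theorem \ref{Theorem2.4}: given $x,y\in U^{b}$ that are not norm-equivalent, a common affine chart $\Spec A$ containing $r(x),r(y)$ yields $f,g\in A$ with $0<|f|_x,|f|_y<1$ and $\log|g(x)|/\log|f(x)|\neq \log|g(y)|/\log|f(y)|$; the corresponding model Green functions then separate the images of $x,y$ in $\widetilde{U}^{b}$. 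Once density holds, Cauchy sequences in the boundary norm on $\hDiv(U)_{\mod}$ correspond bijectively to sup-norm Cauchy sequences of continuous functions on the compact space $\widetilde{U}^{b}$, so the two completions match and surjectivity follows.

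The main obstacle will be uniformly handling the archimedean fibers of the structure map $\pi:U^{\an}\to[0,1]$ in the case $K=\R$ or $\C$. For non-archimedean $K$ the identification at the end of the non-archimedean local discussion in Section \ref{BerkovichSpaces} of $\widetilde{U}^{b}$ computed over $(K,|\cdot|_{\Ban})$ with the one computed over $(\O_K,|\cdot|_0)$ lets the non-archimedean global argument run verbatim. In the archimedean setting I must verify that model Green functions on projective $\O_K=K$-models restrict continuously across $\widetilde{U}^{b}$, which interpolates between the trivially-valued non-archimedean stratum at $t=0$ and the usual complex-analytic stratum at $t=1$; the product decomposition $\pi^{-1}((0,1])\cong U_1^{\an}\times(0,1]$ used in the archimedean proof of Theorem \ref{Theorem2.4} should reduce this to gluing the non-archimedean density statement at $t=0$ with the classical fact that model Green functions are uniformly dense in $C(U_1^{\an})$ modulo a fixed boundary divisor.
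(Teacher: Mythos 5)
Your non-archimedean reduction matches the paper's: every point of $(U/(\O_K,|\cdot|_0))^{\an}$ is norm-equivalent to a point of $(U/(K,|\cdot|_{\Ban}))^{\an}$, so $\widetilde{U}^{b}$ is the same in both pictures and the trivially-valued argument over the noetherian domain $\O_K$ runs verbatim; that part of your plan is sound. But your separation-of-points step is not quite right as stated: the prime-avoidance construction from the proof of Theorem \ref{Theorem2.4} produces norm-invariant functions of the form $\log|g|/\log|f|$, and these are \emph{not} model Green functions, so they cannot be fed into Stone--Weierstrass. The paper separates points with functions attached to boundary ideal sheaves: if $r(x)\neq r(y)$ take $Y=\overline{\{r(x)\}}$, and if $r(x)=r(y)$ take $\mathcal{I}_m=(f)+\mathcal{I}_D^{m}$, whose model function is $\min\{-\log|f|/g_{D},m\}$. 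This is a repairable but real step, and in the archimedean case separation needs a different device again (perturbing a Green function locally on the complex stratum).

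The more serious gap is the archimedean case $K=\R,\C$. Stone--Weierstrass requires the approximating family to be a lattice (or an algebra); in the trivially-valued case model functions are min-stable because $h_{\mathcal{I}_1+\mathcal{I}_2}=\min\{h_{\mathcal{I}_1},h_{\mathcal{I}_2}\}$, but archimedean model functions are not closed under $\min$, and your proposal never confronts this. Your plan to ``glue'' density on the fiber $U_1^{\an}$ with the non-archimedean statement at $t=0$ is not an argument: density on the open stratum $U_1^{\an}$ says nothing about uniform approximation on the compact space $\widetilde{U}^{b}$, where the behavior near the non-archimedean boundary is controlled by $g/g_{D_0}$, and the ``classical fact'' you invoke is, in the form actually needed, precisely the nontrivial input \cite[Thm. 3.6.4]{YZ}. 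The paper's route is: the restriction of $\min\{h_1,h_2\}$ to the non-archimedean part of $\widetilde{U}^{b}$ is a model function $h_E$; the difference $\min\{h_1,h_2\}-h_E$ vanishes there, hence by the growth lemma (a continuous function on $U_1^{\an}$ extends by zero to the non-archimedean part iff it is $o(g_{D_0})$ along the boundary) and \cite[Thm. 3.6.4]{YZ} it is the Green function of an adelic divisor, i.e.\ a limit of model functions; only then does Stone--Weierstrass apply. Without this step, or a substitute for it, your density claim and hence surjectivity in the archimedean case is unproven. Finally, your closing assertion that sup-norm Cauchy sequences of $h_{E_i}$ ``correspond bijectively'' to boundary-norm Cauchy sequences of divisors is not formal: converting $-\varepsilon_i g_{D_0}\leqslant g_{E_i}-g_{E_j}\leqslant\varepsilon_i g_{D_0}$ into effectivity inequalities of divisors uses \cite[Lemma 3.3.3]{YZ}, which should be cited explicitly.
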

We give a direct corollary here. Let $ U $ be a quasi-projective variety over a complete field $ K $ with a non-trivial norm $ |\cdot| $. Fix a projective model $ X $ with $ D $ a divisor, and let $ g_D $ be a positive continuous Green function of $ D $ on $ U_1^{\an} = (U/(K,|\cdot|))^{\an}  $. Let $ g:U_1^{\an}\to \R $ be any continuous function. 
\begin{corollary}
	The function $ g $ is a Green function of some adelic divisor if and only if the quotient $ g/g_{D}:U^{\an}\to \R $ extends continuously to $ \widetilde{U}^b $, where $ \widetilde{U}^b $ is viewed as a compactification of $ U_1^{\an} $. 
\end{corollary}

\subsection{Proof of Theorem \ref{Theorem3.7}}\label{Proof}
We first prove the case that $ k=\Z $  or a field with the trivial norm. Then we prove the hybrid case and the global case. It suffices to prove the case when the integral part is zero. 
\subsection*{Trivial norm case}
First we consider the trivial norm case, where $ k=\Z $ or a field with the trivial norm $ |\cdot|_0 $. 

Let $ X $ be a fixed projective model of $ U/k $, such that $ D=X\backslash U $ is a divisor. Then each ideal sheaf $ \mathcal{I}\subset \O_X $ defines a closed subscheme $ Y\subset X $. 

Now suppose that $ |Y|\subset |D| $. Such ideal sheaves are called  boundary ideal sheaves. Then $ \mathrm{Bl}_Y X $ ia also a projective model of $ U $, and the exceptional divisor $ E_Y $ is a model divisor in $ \Div(U)_{\mod} $. The analytification map gives an arithmetic divisor $ (0,g_Y) $ on $ U^{\an} $ with integral part zero, and a continuous function $ g_Y(x) $ on $ U^{\an} $. 

The function $ g_Y $ has the form
\begin{align*}
g_Y(x)=\min \{-\log |a(x)|: a\in \mathcal{I}_{r(x)}\}.
\end{align*}
It is identically zero on $ U^\beth $, and is always norm-equivariant and non-negative. If further that $ Y=D $, we see that $ g_D(x) $ is strictly positive on $ U^{b} $. 

Since $ g_Y(x) $ and $ g_D(x) $ are both norm-equivariant, the quotient $ h_Y(x)=\frac{g_Y(x)}{g_D(x)} $ is a norm-invariant continuous function on $ U^{b} $, thus it is the pull-back of a continuous function on $ \widetilde{U}^{b} $, which is still denoted by $ h_Y $. Such a function is called a \emph{model function} on $ \widetilde{U}^{b} $ associated to the closed subscheme $ Y $. If $ Y $ is defined by the boundary ideal sheaf $ \mathcal{I} $, we may also write $ g_{\mathcal{I}}=g_Y $ and $ h_{\mathcal{I}}=h_Y $. 

The constant function $ 1=h_D $ is a model function. Since we have
$$ h_{\mathcal{I}_1 \mathcal{I}_2} = h_{\mathcal{I}_1} + h_{\mathcal{I}_2} $$
and 
\begin{align*}
h_{\mathcal{I}_1 + \mathcal{I}_2} = \min\{h_{\mathcal{I}_1} , h_{\mathcal{I}_2}\},
\end{align*}
the space of model functions are closed under addition and taking minimum. To show that the $ \Q $-linear combination of model functions associated to the boundary ideal sheaf is dense in the Banach space of continuous functions on $ \widetilde{U}^{b} $, we need to show that model functions separate points.

\begin{prop}
	Model functions on $ \widetilde{U}^{b} $ separate points.
\end{prop}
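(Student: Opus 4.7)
My plan, given distinct points $\tilde{x}, \tilde{y} \in \widetilde{U}^{b}$, is to construct a single boundary ideal sheaf $\mathcal{I}$ whose model function $h_{\mathcal{I}}$ separates them. Using the homeomorphism $\widetilde{U}^b \cong \delta_{\bar{D}}(U)$, I would choose representatives $x, y \in \delta_{\bar{D}}(U)$ normalized so that $g_D(x) = g_D(y) = 1$, and pick an affine open $V = \Spec A \subset X$ containing both reductions $r(x), r(y)$; on $V$ let $d \in A$ be the local defining equation of $D$, so that $|d|_x = |d|_y = 1/e$.

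The core local step is to find $s \in A$ with $0 < |s|_x, |s|_y < 1$ and $|s|_x \neq |s|_y$. The common normalization at $d$ upgrades the non-equivalence $x \not\sim_{\eqv} y$ into the honest inequality of semi-norms $|\cdot|_x \neq |\cdot|_y$ on $A$, so some $a \in A$ satisfies $|a|_x \neq |a|_y$. When $\rho(x) = \rho(y)$, I would take $s = a d^M$ for $M$ large: multiplication by $d^M$ preserves the inequality while forcing both values into $(0,1)$. When $\rho(x) \neq \rho(y)$, after possibly swapping $x$ and $y$ I would pick $c \in \rho(x) \setminus \rho(y)$ and set $s = c d^M + d^N$ with integers $M < N$ tuned so that $|c|_y\, e^{-M} > e^{-N}$ and these two numbers are distinct; the ultrametric strict triangle inequality then yields $|s|_x = e^{-N} \neq |c|_y e^{-M} = |s|_y$, with both in $(0,1)$.

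Finally, I would globalize by letting $\mathcal{I}$ be the scheme-theoretic closure in $X$ of the closed subscheme of $V$ cut out by $(d^k, s)$, for an integer $k > \max(-\log|s|_x, -\log|s|_y)$. Because this subscheme is contained in $V(d) = D \cap V$, its closure lies in $D$, so $\mathcal{I}$ is a boundary ideal sheaf, and by the standard behavior of scheme-theoretic closure under open immersions we have $\mathcal{I}|_V = (d^k, s)$. Since $g_{\mathcal{I}}(x)$ only sees the stalk $\mathcal{I}_{r(x)}$, a direct ultrametric computation yields $g_{\mathcal{I}}(x) = \min(k, -\log|s|_x) = -\log|s|_x$ and likewise $g_{\mathcal{I}}(y) = -\log|s|_y$, which differ. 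Hence $h_{\mathcal{I}}(\tilde{x}) \neq h_{\mathcal{I}}(\tilde{y})$. The main obstacle is the case $\rho(x) \neq \rho(y)$ in the second paragraph: one must convert the purely set-theoretic distinction of two primes into a genuine pair of unequal real numbers, which I would accomplish by the mixture of a function vanishing at one prime with a pure power of the boundary equation together with the strict triangle inequality. The global extension step, by contrast, is formally clean once the correct local $s$ is identified, because the model function is a stalk-local invariant at the reduction points.
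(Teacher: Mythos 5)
Your proof is correct and is essentially the paper's argument: after normalizing representatives in $\delta_{\bar{D}}(U)$, you produce a regular function on a common affine chart whose seminorm values at the two points differ and lie in $(0,1)$, then force it into a boundary ideal sheaf by adjoining a large power of the boundary, so that the associated model function records $-\log|s|$ at the two points and separates them (the paper splits cases by $r(x)$ versus $r(y)$ and uses $\mathcal{I}+\mathcal{I}_D^m$ with the truncation $\min\{-\log|f|/g_D,\,m\}$, whereas you split by $\rho$ and handle the zero-value case with the $cd^M+d^N$ trick, but the mechanism is the same). The only glossed point is the existence of a single equation $d$ cutting out $D$ on an affine open containing both $r(x)$ and $r(y)$ — not automatic for an arbitrary such affine — but this is standard (choose $V$ trivializing $\O_X(-D)$) or can be avoided entirely by using $\mathcal{I}_D^k$ in place of $(d^k)$, exactly as the paper does.
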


\begin{proof}
	The proof here is almost the same as in \cite[Prop. 2.2]{BFJ}
	
	Let $ x,y\in U^{b} $ are two points which are not norm-equivalent. Choose a suitable $ x' $ which is norm-equivalent to $ x $ if necessary, we may assume that $ g_D(x)=g_D(y) $, and still $ x\neq y $. 
	
	If $ r(x)\neq r(y) $, we may assume that $ r(y)\notin \overline{r(x)} $. Let $ Y=\overline{r(x)} $. Then $ h_Y(x)>0 $ and $ h_Y(y)=0 $, thus $ h_Y $ separates $ x,y $. 
	
	Now assume that $ r(x)=r(y) $. Take an affine open neighborhood $ \Spec A $ of $ r(x) $.  Since $ x\neq y $, there exists $ f\in A $ such that $ |f(x)|\neq |f(y)| $. Consider the ideal sheaf $ \O_{\Spec A}\cdot f $, which can be extended to an ideal sheaf $ \mathcal{I} $ on $ X $. For each positive integer $ m $, the ideal sheaf $ \mathcal{I}_m=\mathcal{I}+(\mathcal{I}_D)^m $ is a boundary ideal sheaf on $ X $, and we have
	\begin{align*}
	h_{\mathcal{I}_m}=\min\{ \frac{-\log |f|}{g_D},m \}
	\end{align*}
	at $ x $ and $ y $, hence it separates $ x,y $ for $ m $ large. 
\end{proof}

By Boolean ring version of Stone-Weierstrass theorem, we get
\begin{corollary}\label{Corollary4.2}
	The $ \Q $-vactor space generated by model functions associated to boundary ideal sheaves is dense in $ C(\widetilde{U}^{b}) $. 
\end{corollary}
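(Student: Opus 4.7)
The plan is to apply the lattice version of the Stone--Weierstrass theorem to the $\Q$-vector space $V\subset C(\widetilde{U}^{b})$ spanned by the model functions $h_{\mathcal{I}}$ associated to boundary ideal sheaves. By Theorem \ref{Theorem2.4}, $\widetilde{U}^{b}$ is compact Hausdorff, so the framework applies. Three properties of the uniform closure $\bar V$ need verification: it contains the constants (immediate from $1=h_D$), it separates points (by the preceding proposition), and it is a sublattice of $C(\widetilde{U}^{b})$.

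The first structural step is to analyze the positive cone $C_\Q\subset V$ of $\Q_{\geq 0}$-linear combinations of model functions. Using the identity $h_{\mathcal{I}_1}+h_{\mathcal{I}_2}=h_{\mathcal{I}_1\mathcal{I}_2}$ to collapse sums after clearing a common denominator, every element of $C_\Q$ takes the form $(1/n)\,h_{\mathcal{I}}$ for some positive integer $n$ and boundary ideal sheaf $\mathcal{I}$. Combining $\min(h_{\mathcal{I}},h_{\mathcal{J}})=h_{\mathcal{I}+\mathcal{J}}$ with the positive-homogeneity $\min(af,ag)=a\min(f,g)$ for $a\geq 0$ then shows that $C_\Q$ is itself closed under pointwise minimum, hence forms a semiring under $(+,\min)$.

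To pass from $C_\Q$ to all of $V=C_\Q-C_\Q$, I would invoke the translation identity
\[
\min(f_1-g_1,\,f_2-g_2)=\min(f_1+g_2,\,f_2+g_1)-(g_1+g_2),
\]
together with $\max(x,y)=-\min(-x,-y)$. These exhibit $V$ as a $\Q$-linear sublattice of $C(\widetilde{U}^{b})$. Its uniform closure $\bar V$ is then automatically a real linear subspace (since $\Q$ is dense in $\R$), a sublattice (by continuity of $\min,\max$), contains the constants, and separates points, so the lattice Stone--Weierstrass theorem gives $\bar V=C(\widetilde{U}^{b})$.

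The main obstacle is the lattice-closure step: the two displayed identities for ideal sheaves only yield closure under $\min$ for pairs of model functions, whereas a general element of $V$ is a difference of such. The tropical translation identity above, combined with the reduction of $C_\Q$ to scaled single model functions, is the decisive maneuver that bridges the ideal-sheaf algebra of the $h_{\mathcal{I}}$ and the real lattice structure of $C(\widetilde{U}^{b})$; once it is in hand, the density claim follows by the standard Stone--Weierstrass pattern.
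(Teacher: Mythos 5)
Your proposal is correct and follows essentially the same route as the paper: the identities $h_{\mathcal{I}_1\mathcal{I}_2}=h_{\mathcal{I}_1}+h_{\mathcal{I}_2}$ and $h_{\mathcal{I}_1+\mathcal{I}_2}=\min\{h_{\mathcal{I}_1},h_{\mathcal{I}_2}\}$, the constant $1=h_D$, and the separation proposition, fed into a Stone--Weierstrass argument on the compact Hausdorff space $\widetilde{U}^{b}$. The only difference is that you spell out why the full $\Q$-span is a sublattice (reducing the positive cone to scaled single model functions and using the translation identity for $\min$ of differences), a step the paper absorbs into its appeal to the ``Boolean ring version'' of the Stone--Weierstrass theorem.
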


Thus it's ready to prove our main theorem for the non-archimedean case.

\begin{proof}
	It suffices to show that any continuous norm-equivariant function $ g $ on $ X^{\an} $ is a Green function of some boundary adelic divisors in $ \Div(X/k) $. 
	
	Fix a projective model $ X $ such that $ D_0=X\backslash U $ be  a divisor. Then $ h=g/g_{D_0} $ is a continuous norm-invariant function on $ U^{b} $. By the above proposition, $ h $ can be approximated by a sequence of model functions $ h_{E_i} $ uniformly on $ \widetilde{U}^b $, where $ E_i\in \Div(U)_{\mod} $ is a model boundary adelic divisor. 
	
	We claim that the sequence $ E_i $ is a Cauchy sequence. Since $ h_{E_i} $ is a Cauchy sequence in $ C(\widetilde{U}^{b}) $, we see that there exists $ \varepsilon_i\in \Q_{>0} $, such that for all $ j>i $
	\begin{align*}
	|h_{E_i}(x)-h_{E_j}(x)| \leqslant \varepsilon_{i}
	\end{align*}
	Thus we have
	\begin{align*}
	-\varepsilon_i g_{D_0}(x)\leqslant g_{E_i}(x) -g_{E_j}(x) \leqslant \varepsilon_i g_{D_0}(x)
	\end{align*}
	for all $ j>i $ and all $ x\in U^{b} $. Note that all these functions are zero in $ \widehat{U}^{\beth} $, hence it holds on $ U^{\an} $. 
	
	Use \cite[Lemma 3.3.3]{YZ}, possibly after a normalization, we get inequalities	
	\begin{align*}
	-\varepsilon_i D_0\leqslant E_i - E_j \leqslant \varepsilon_i D_0
	\end{align*}
	in terms of effectivities. Thus $ E_i $ converges to an adelic divisor $ E $. It is easy to check that the Green function of $ E $ is just $ g $. 
\end{proof}

\subsubsection*{Hybrid norm case and the global case}
Next we prove the hybrid norm case. Let $ U $ be a quasi-projective variety over $ K $, and $ X $ is a fixed projective model of $ U $ with boundary divisor $ D_0=X\backslash U $, and let $ \|\cdot\|_{\Ban} $ be the hybrid norm. 

Again, we may define the space of model functions on $ U^{b} $. The problem is that model functions themselves are not closed under taking minimum. We use Yuan--Zhang's results instead. 

First, we give a lemma, which works for both archimedean and non-archimedean local fields. Let $ K $ be a local field with valuation $ |\cdot| $, let $ |\cdot|_0 $ be the trivial norm on $ K $, and let $ |\cdot|_{\hyb}=\max\{ |\cdot|,|\cdot|_0 \}  $. Then $ \M(K,|\cdot|_{\hyb}) $ is homeomorphic to $ [0,1] $. 

Let $ U $ be a quasi-projective variety over $ K $, and consider  the structure morphism $ \pi:U^{\an}\to \M(K,|\cdot|_{\Ban})\cong [0,1] $. Let $ U_t^{\an} $ be the fiber over $ t $.  

\begin{lemma}
	Let $ g\in C(U^{\an}_1) $ be any continuous function. Extend $ g $ to $ U^{\an}_{(0,1]} $ norm-equivariantly. Then $ g $ extends continuously to a continuous function $ \tilde{g}\in C( U^{\an}) $ with $ \tilde{g}|_{U_0^{\an}} = 0 $ if and only if $ g $ grows as $ o(g_{D_0}) $ on $ U^{\an}_1 $ along the boundary. 
\end{lemma}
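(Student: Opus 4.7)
The plan is to prove the two implications separately, using the analytified Green function $\tilde{g}_{D_0}$ of the boundary divisor $D_0 = X \setminus U$ as a universal comparison function. On $\pi^{-1}((0,1]) \cong U_1^{\an} \times (0,1]$ one has $\tilde{g}_{D_0}(y,t) = t\, g_{D_0}(y)$, and by the analytification construction of Yuan--Zhang this extends continuously to all of $U^{\an}$, vanishing on $U^{\beth}$ and strictly positive elsewhere on $U_0^{\an}$.

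For the sufficiency direction, suppose $g = o(g_{D_0})$ along the boundary. Fix $\epsilon > 0$. Since the sublevel set $\{g_{D_0} \leqslant M\}$ in $U_1^{\an}$ is compact (it is closed in the compact space $X_1^{\an}$ and disjoint from $|D_0|^{\an}$, where $g_{D_0} = +\infty$), the $o$-hypothesis produces an $M$ and then a constant $C = C(\epsilon)$ such that $|g(y)| \leqslant C + \epsilon\, g_{D_0}(y)$ uniformly on $U_1^{\an}$. Multiplying by $t$ and extending trivially to $U_0^{\an}$, this gives the bound $|\tilde{g}(x)| \leqslant C\, \pi(x) + \epsilon\, \tilde{g}_{D_0}(x)$ for every $x \in U^{\an}$. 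The right-hand side is continuous, so for any $x_0 \in U_0^{\an}$ and any net $x \to x_0$ one obtains $\limsup |\tilde{g}(x)| \leqslant \epsilon\, \tilde{g}_{D_0}(x_0)$; letting $\epsilon \to 0$ forces $\tilde{g}(x) \to 0 = \tilde{g}(x_0)$.

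For the necessity direction, argue by contradiction. If the $o$-condition fails, there exist $\epsilon > 0$ and $y_n \in U_1^{\an}$ with $g_{D_0}(y_n) \to \infty$ and $|g(y_n)| \geqslant \epsilon\, g_{D_0}(y_n)$. Set $t_n := 1/g_{D_0}(y_n) \to 0$ and $x_n := (y_n, t_n)$, so that $|\tilde{g}(x_n)| \geqslant \epsilon$ for every $n$. Passing to a subsequence, the $y_n$ accumulate on $|D_0|^{\an} \subset X_1^{\an}$; in a local affine chart $\Spec A \subset X$ where $D_0$ is cut out by $f \in A$, the normalization yields $|f(x_n)| = |f(y_n)|^{t_n} = e^{-1}$. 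Compactness of $X^{\an}$ provides a further subsequence with $x_n \to x_\infty \in X^{\an}$; then $\pi(x_\infty) = 0$ and $|f(x_\infty)| = e^{-1} > 0$ place $x_\infty$ inside $U_0^{\an}$, so $|\tilde{g}(x_\infty)| \geqslant \epsilon$ contradicts $\tilde{g}|_{U_0^{\an}} \equiv 0$.

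The main obstacle is the presence of points $x_0 \in U_0^{\an} \setminus U^{\beth}$ where $\tilde{g}_{D_0}(x_0) > 0$, so that the comparison bound in the sufficiency argument only delivers $\limsup |\tilde{g}(x)| \leqslant \epsilon\, \tilde{g}_{D_0}(x_0)$; this is resolved by exploiting that $\epsilon$ is arbitrary. The companion subtlety in necessity is ensuring that the accumulation point $x_\infty$ stays in $U^{\an}$ rather than escaping into $|D_0|^{\an}$, and the normalization $t_n = 1/g_{D_0}(y_n)$ is tailored precisely so that $|f(x_n)|$ remains pinned at $e^{-1}$, guaranteeing $\rho(x_\infty) \in U$.
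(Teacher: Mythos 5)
Your proposal is correct in substance, but it takes a more hands-on route than the paper. The paper's proof passes to the norm-invariant ratio $g/g_{D_0}$ and views $X_1^{\an}$ and $\widetilde{U}^{b}$ as two compactifications of the open subset $U_1^{\an}$ (using Theorem \ref{Theorem2.4}), so that both conditions collapse to the single statement that $g/g_{D_0}\to 0$ as one leaves the compact subsets of $U_1^{\an}$. You instead stay on $U^{\an}$ and compare $\tilde{g}$ with the two continuous functions $\pi$ and $\tilde{g}_{D_0}$: the estimate $|\tilde{g}|\leqslant C\pi+\epsilon\,\tilde{g}_{D_0}$ for sufficiency, and a sequence normalized to height $\tilde{g}_{D_0}=1$ together with compactness of $X^{\an}$ for necessity. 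This avoids invoking the quotient space $\widetilde{U}^{b}$ altogether, at the cost of redoing a compactness argument the paper has already packaged; your sufficiency direction is clean, and taking $\limsup$ for fixed $\epsilon$ before letting $\epsilon\to 0$ correctly handles the points of $U_0^{\an}\setminus U^{\beth}$ where $\tilde{g}_{D_0}>0$.

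Two points in your necessity argument need patching, both routine. First, $|f(x_n)|=|f(y_n)|^{t_n}=e^{-1}$ is not an identity: it presumes $g_{D_0}=-\log|f|$ on the chart, whereas a Green function satisfies this only up to a continuous error $c=g_{D_0}+\log|f|$; what your normalization gives exactly is $\tilde{g}_{D_0}(x_n)=1$, and $|f(x_n)|=e^{-1+t_nc(y_n)}$, where $c(y_n)$ is controlled only after forcing $\rho(y_n)$ (along a subnet) into a chart around an accumulation point of the $y_n$ in $X_1^{\an}$. Second, concluding $|f(x_\infty)|>0$ needs the accumulation point $x_\infty$ to lie in the chart where $f$ is defined, which is not automatic. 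The clean repair is the device from the proof of Theorem \ref{TheoremD}: extend $\tilde{g}_{D_0}$ to a continuous map $\bar{g}_{D_0}\colon X^{\an}\to\R\cup\{\infty\}$, equal to $\infty$ exactly on $D_0^{\an}$; then $x_n\in\bar{g}_{D_0}^{-1}(1)$, which is closed in the compact space $X^{\an}$ and contained in $U^{b}$, so every subnet limit $x_\infty$ lies in $U^{\an}$ with $\pi(x_\infty)=0$, and continuity of $\tilde{g}$ together with $\tilde{g}|_{U_0^{\an}}=0$ gives the contradiction. (Also, since these spaces need not be metrizable, extract subnets rather than subsequences when using compactness.)
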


\begin{proof}
	Consider the norm-invariant function $ g/g_{D_0} $, viewed as a continuous function on $ U_1^{\an} $. Note that $ U^{\an}_1 $ is also an open subset of $ \widetilde{U}^{b}  $. 
	
	Now $ X^{\an}_1 $ and $ \widetilde{U}^{b} $ are two compactification of $ U_1^{\an} $, and the two conditions are both equivalent to $ g/g_{D_0} $ converges to zero when $ x $ approaches the boundary. 
\end{proof}
Fix a projective model $ X_0 $ of $ U $ with boundary divisor $ D_0=X_0\backslash U $. We also fixed a positive Green function $ g_{\bar{D}_0} $. Let $ X $ be any projective model of $ U $, and let $ \bar{D}=(D,g_D)\in \hDiv(X) $ with $ D|_U=0 $. Then $ \bar{D} $ induces a norm-equivariant continuous function $ \tilde{g}_{\bar{D}} $ on $ U^{\an} $. Let $ h_{\bar{D}}=g_{\bar{D}}/g_{\bar{D}_0} $ be the norm-invariant function on $ U^{b} $, which we viewed as a function on $ \widetilde{U}^{b} $. Such functions are called model functions. 

\begin{prop}
	The $ \Q $-linear combination of model functions are dense in $ C(\widetilde{U}^{b}) $. 
\end{prop}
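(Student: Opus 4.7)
The plan is to mimic the non-archimedean strategy of Corollary \ref{Corollary4.2}, but because the archimedean Green functions are not closed under pointwise minimum, the Boolean-ring form of Stone--Weierstrass no longer applies directly. Instead, I would exploit the decomposition of $\widetilde{U}^{b}$ into the open archimedean fiber $U_1^{\an}$ and the closed non-archimedean boundary $(U^{b}\cap U_0^{\an})/\!\sim_{\eqv}$, bridging the two regions via the preceding lemma.

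Given $\phi\in C(\widetilde{U}^{b})$ and $\varepsilon>0$, I would first restrict $\phi$ to the non-archimedean boundary, which canonically coincides with the normalized boundary of $U/(\C,|\cdot|_0)$ and thus falls under the non-archimedean global case of Corollary \ref{Corollary4.2}. This yields a $\Q$-linear combination $\Psi_0 = \sum_i a_i h_{\mathcal{I}_i}$ of boundary-ideal-sheaf model functions approximating $\phi$ on this portion within $\varepsilon/2$. For each $\mathcal{I}_i$, the associated blow-up gives an effective model divisor $E_i$ supported on $D_0$; choosing any continuous Green function $g_{E_i}$ on the archimedean fiber promotes $E_i$ to an arithmetic divisor $\bar{E}_i$, and the archimedean model function $h_{\bar{E}_i}\in C(\widetilde{U}^{b})$ restricts to $h_{\mathcal{I}_i}$ on the non-archimedean boundary. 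The sum $\Psi = \sum_i a_i h_{\bar{E}_i}$ is then a $\Q$-linear combination of archimedean model functions agreeing with $\Psi_0$ on the non-archimedean boundary.

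Next I would analyze the residue $\phi - \Psi$, which is continuous on $\widetilde{U}^{b}$ and small on the non-archimedean boundary. Applying the preceding lemma to $(\phi - \Psi)|_{U_1^{\an}}\cdot g_{\bar{D}_0}$, this product satisfies the $o(g_{\bar{D}_0})$ growth condition on $U_1^{\an}$ along the classical boundary, so its norm-equivariant extension is continuous on $U^{\an}$ with vanishing on $U_0^{\an}$. The task then becomes to realize such extensions (up to an additional $\varepsilon/2$ error) as Green functions $g_{\bar{F}}$ of model arithmetic divisors $\bar{F} = (F, g_F)$ with $F|_U = 0$, taking advantage of the freedom to vary $g_F$ over continuous functions on $X(\C)$ with prescribed logarithmic singularities along $F \subset D_0$.

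The main obstacle lies in this last step: showing that the ratios $g_{\bar{F}}/g_{\bar{D}_0}$, as $\bar{F}$ ranges over model arithmetic divisors with trivial integral part, are dense in the space of continuous functions on $U_1^{\an}$ extending continuously to $\widetilde{U}^{b}$ with vanishing on the non-archimedean boundary. I would approach this by first approximating the target uniformly on $U_1^{\an}$ by $\Q$-linear combinations of $-\log|f|$ for rational functions $f\in K(X)^{*}$ with $\div(f)$ supported on $D_0$, supplemented by continuous correction terms on $X(\C)$, invoking Stone--Weierstrass on the compact $X(\C)$ once the logarithmic singularities along $D_0$ are correctly handled. Combining the non-archimedean and archimedean approximations yields a $\Q$-linear combination of model functions within $\varepsilon$ of $\phi$ uniformly on $\widetilde{U}^{b}$, completing the proof.
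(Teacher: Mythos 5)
Your overall decomposition (approximate on the non-archimedean part of $\widetilde{U}^{b}$ via the trivially-valued case, then deal with an archimedean residue vanishing there) is sound and is in fact close in spirit to how the paper handles the archimedean contribution. But the step you yourself flag as ``the main obstacle'' is exactly the point that needs a proof, and your sketch for it does not work as stated: rational functions $f\in K(X)^{*}$ with $\div(f)$ supported on $D_0$ need not exist at all (the boundary components are generally not principal, so the only such $f$ may be constants), and ordinary Stone--Weierstrass on the compact space $X(\C)$ only gives approximation in the sup norm on $X(\C)$, whereas what is required is approximation in the $g_{\bar{D}_0}$-normalized norm, i.e.\ uniformity of $(\,\cdot\,)/g_{\bar{D}_0}$ up to the non-archimedean boundary of $\widetilde{U}^{b}$. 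The paper closes precisely this step by quoting Yuan--Zhang \cite[Thm.~3.6.4]{YZ}: a continuous norm-equivariant function vanishing on the non-archimedean part is the Green function of an adelic divisor, hence a limit of model Green functions. (Alternatively, one can argue directly: since the zero divisor with an arbitrary continuous Green function $g\in C(X(\C))$ is a legitimate model arithmetic divisor with trivial integral part, take $g$ to be $\phi\cdot g_{\bar{D}_0}$ multiplied by a cutoff that kills it near $D_0(\C)$; the error is supported where $|\phi|$ is already small, because $\phi=o(1)$ there by your own lemma. Either way, this argument must actually be made.) Note that the paper's route is different in structure: it verifies separation of points and stability of the closure under $\min$ and then applies the lattice form of Stone--Weierstrass on $\widetilde{U}^{b}$, with the cited theorem doing the archimedean work in the $\min$-stability step.

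A second, smaller defect: after subtracting $\Psi$, the residue $\phi-\Psi$ is only of size $\varepsilon/2$ on the non-archimedean part, not identically zero, so your application of the extension lemma (which requires vanishing on $U_0^{\an}$) fails as written. This is repairable: truncate, writing $\phi-\Psi=\psi_1+\psi_2$ with $\psi_2=\max\{-\varepsilon/2,\min\{\phi-\Psi,\varepsilon/2\}\}$, so that $\|\psi_2\|_\infty\leqslant\varepsilon/2$ and $\psi_1$ vanishes on a neighborhood of the non-archimedean part; then run your argument on $\psi_1$. With that fix, and with the density statement for functions vanishing on the non-archimedean boundary supplied (by the cutoff construction or by \cite[Thm.~3.6.4]{YZ}), your approach does yield the proposition; as it stands, however, the decisive approximation step is asserted rather than proved.
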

\begin{proof}
	First, we show that model functions separate points. Let $ x,y\in \widetilde{U}^{b}$ be two different points. If one of them is archimedean,  we can only change the Green function on a small neighborhood in the archimedean part leaving all other place fixed. Otherwise, they are both non-archimedean, and we can use the results for non-archimedean case. 
	
	However, the model functions themselves are not stable under taking minimum. We need to check that if $ h_1,h_2 $ are two model functions, then $ \min\{ h_1,h_2 \} $ is contained in the closure of the space of model functions.
	
	Suppose that $ h_i $ is the model function of $ (D_i,g_i) $.
	Let $ \widetilde{U}^b_{0} $ be the non-archimedean part of $ \widetilde{U}^{b} $. Then $ \min\{h_1,h_2\}|_{\widetilde{U}^b_{0}} $ is a model function of some boundary divisor $ E $. Take any Green function $ g_E $ of $ E $. Now we consider the difference
	\begin{align*}
	\min\{h_1,h_2\} -h_E
	\end{align*}
	It is identically zero on the non-archimedean part. By \cite[Thm. 3.6.4]{YZ}, it is the Green function of some adelic divisor, which is the limit of a sequence of model functions.
	
	Now by Stone-Weierstrass theorem, we get the density result. 
\end{proof}
The rest part of the theorem is exactly the same as the non-archimedean case, and the case of $ \Z $ is deduced by the two cases above. 

\begin{remark}\label{Rem3.14}
	One may ask whether it is true for essentially quasi-projective variety $ U $. It's true for $ U=\Spec \Q $ as a scheme over $ \Z $, but not in general.
	
	Let $ f $ be a continuous and equivariant function on $ U^{\an} $.  It can be shown that $ f $ can be extended by zero to some $ V^{\beth} $ where $ V $ is a quasi-projective model of $ U $. For $ U=\Spec \Q $, we have $ V^{\beth}\cup U^{\an} = V^{\an} $, but this do not hold for general $ U $. 
	
	For example, consider the scheme $ \Spec \C(x) $. It has no archimedean points in analytification, so it can say nothing about it. Similarly for $ \Spec \Q_p(x) $ over $ \Q_p $, and $ f $ is identically zero on the trivial fiber $ (\Spec \Q_p(x))_{0}^{\an} $, we will have to check whether a continuous bounded function on $ (\Spec \Q_p(x))^{\an} $ could be extended to $ \mathbb{P}_{\Q_p}^{1,\an} $ almost everywhere, which is clearly not true in general. 
\end{remark}

\section{Adelic divisors on toric varieties}\label{Toric}
In this section we study a class of adelic divisors on toric varieties. Most background we need about toric varieties can be found in the books \cite{CLS,KKMS}. The field $ k $ in this section is assumed to be algebraically closed.

\subsection{Preliminaries on toric varieties}

Let $ T\cong (\mathbb{G}m)^n $ be a torus over a field $ k $. Let $ M=\Hom_{k-\text{alg. group}}(T,\mathbb{G}m) $ the character group, and $ N=\Hom_{k-\text{alg. group}}(\mathbb{G}m,T) $ the cocharacter group. For each $ m\in M $, let $ \chi^m $ be the corresponding invertible function on $ T $. Then $ \Gamma(T,\O_T) $ is generated by $ \{\chi^m\}_{m\in M} $ as vector space and we have a canonical isomorphism $ T\cong\Spec k[M] $.

There is a perfect pairing $ M\times N\to \Z $ given by $ (a,b)\mapsto \deg a\circ b:\mathbb{G}m\to \mathbb{G}m $. Then it induces the perfect pairing $ M_{\R} \times N_{\R}\to \R $. We will always identify $ M\cong N^{\vee} $ and $ M_{\R}\cong N_{\R}^{\vee} $. 

A toric variety $ X $ is a variety containing an open subset $ X_0\cong T $ such that the translation action $ T\times X_0\to X_0 $ extends to an action $ T\times X\to X $. There are deep connections between toric varieties and polytopes, polyhedra, and combinatorics. For example, we will frequently use the following facts. 
\begin{theorem}
There is a one-to-one correspondence between complete normal toric varieties and a finite rational polyhedral decomposition of $ N_{\R} $. 
\end{theorem}
See \cite[Theorem 6,7,8]{KKMS} for a proof. 

Let $ X $ be a complete normal toric variety. Each Cartier divisor supported on the boundary is also a Weil divisor, which is sum of codimension 1 subvarieties contained in $ X\backslash X_0 $. Since $ T $ acts on $ X $, it must be sum of codimension 1 orbits of $ T $. In particular, any boundary Weil divisor and Cartier divisors are $ T $-invariant. 

Now let $ D $ be a Cartier divisor $ D $ supported on the boundary. It will induce a supporting function on $ N_{\R} $ as follows. Recall that a point $ a\in N $ corresponds to a 1-parameter subgroup $ \lambda_a:\mathbb{G}m\to X $. Since $ X $ is complete, $ \lambda_a $ extends to a function $ \bar{\lambda}_a:\mathbb{A}^1\to X $, and $ \bar{\lambda}_a^*D $ is a Cartier divisor on $ \mathbb{A}^1 $ supported at the closed point $ 0 $. Its order is denoted by $  \ord_0\ \bar{\lambda}_a^* D $.

\begin{definition}
The supporting function $ \SF_D:N_{\R}\to \R $ is the unique continuous conical(meaning that $ f(\lambda x)=\lambda f(x) $ for all $ \lambda\in \R_+ $) function such that $ \mathrm{SF}_D(a)= \ord_0\ \bar{\lambda}_a^* D $ for all $ a\in N $. 
\end{definition}
To see that $ SF_D $ is well-defined, we refer to  \cite[\S 4.2]{CLS} and \cite[Theorem 9.]{KKMS}. Be careful that the supporting function in \cite[\S 4.2]{CLS} differs with ours by a sign.

\subsection{Analytification and tropicalization}
In this subsection we study some analytic property of toric varieties. Equipped $ k $ with the trivial norm, we get an analytic space $ T^{\an} $. Further, there is a \emph{tropicalization map} 
\begin{align*}
\mathrm{trop}: T^{\an}\to N_{\R}
\end{align*}
mapping a point $ |\cdot|_x $ to the linear functional $ (\chi^m\mapsto -\log |\chi^m|_x)_{m\in M} $. It is $ \R_{+} $-equivariant.

If we fix a coordinate $ T=\Spec k[T_1^{\pm 1},\ldots,T_n^{\pm 1}] $, then $ M $, $ N $ have canonical basis $ \{T_i\} $ and dual basis $ \{T_i^{\vee}\} $. Under this basis, the tropicalization map $ \mathrm{trop} $ has an explicit form
\begin{align*}
\mathrm{trop}: T^{\an}&\to \R^n\\
|\cdot|_x&\mapsto (-\log |T_1|_x,\ldots,-\log |T_n|_x). 
\end{align*}

The tropicalization map $ \trop:T^{\an}\to N_{\R} $ has a continuous section
\begin{align*}
\mathrm{emb}:N_{\R}\to T^{\an}
\end{align*}
sending the linear functional $ a:M\to \R $ to the norm 
$$ \left|\sum_{m\in M} a_m \chi_m\right| = \max_{m\in M} |a_m| \mathrm{e}^{-\langle m,a\rangle}. $$ 

Let $ q =\mathrm{emb}\circ \trop $. Then Thuillier showed in \cite[\S 2]{Thu} that $ q $ is a continuous retraction of $ T^{\an} $ to the its image $ q(T^{\an}) $. In particular, $ q^2=q $ and $ \trop(x) = \trop(q(x)) $.

We are interested in adelic divisors on $ T $ that are limit of T-invariant divisors on different toric varieties $ X $. First, we give a new description of toric divisors.

\begin{prop}\label{PropSupportingFunction}
	Let $ X $ be a complete toric variety, and $ D\in \Div(X) $ with $ D|_{T}=0 $. Let $ g_D:(X\backslash |D|)^{\an}\to \R $ be Green function of $ D $, which is necessarily equivariant. Then its restriction to $ T^{\an} $ is the pull-back of the supporting function $ \SF_D $ under the tropicalization map $ \trop:T^{\an}\to N_{\R} $. 
\end{prop}
\begin{proof}
	We first prove that for any $ x,y\in T^{\an} $ with $ \trop(x)=\trop(y) $, $ g_D(x)=g_D(y) $. Since $ q(x)=q(y) $, we may assume that $ y=q(x) $. 
	
	Let $ m:T\times X\to X $ be the action of $ T $ on $ X $, and $ p_2:T\times X\to X $ be the projection to the second factor. Note that $ D $ is $ T $-invariant. Therefore, for the two morphisms $ m,p_2:T\times X\to X $, we have $ m^*D=p_2^*D $. 
	
	Let $ g_D:(X\backslash |D|)^{\an}\to \R $ be the Green function of $ D $. Then the Green function of $ m^*D $ is just the pull back of $ g_D $ via the multiplication morphism
	\begin{align*}
	m:(T\times (X\backslash D))^{\an} \to (X\backslash D)^{\an}
	\end{align*}
	and similarly for $ p_2^*D $. Since $ m^*D=p_2^*D $(as usual divisors, not adelic divisors), for each point $ z $ in $ (T\times X)^{\beth}\backslash (T\times D)^{\beth} $, we must have $ g_D(m(z))=g_D(p_2(z)) $. Now it follows from Lemma \ref{LemmaToric} below, and $ g_D $ is the pull-back of some continuous conical function $ f $ on $ N_{\R} $. 
	
	To check that $ f=\SF_D $, for each $ a\in N $, the map $ \lambda_a: \mathbb{G}m\to T $ induces a map $ \lambda_a^{\trop}:N_{\mathbb{G}m,\R} \to N_{T,\R} $, which is just the linear map sending the canonical base in $ N_{\mathbb{G}m} $ to $ a\in N $, and the diagram	\begin{align*}
	\xymatrix{\mathbb{G}_m \ar[r]\ar[d]_{\lambda_a} & N_{\mathbb{G}m,\R} \ar[d]^{i_a}\\
		T\ar[r] & N_{T,\R}}
	\end{align*}
	commutes. Then it reduces to the case that $ T=\mathbb{G}m $, and then by direct computation. 
\end{proof}

\begin{lemma}\label{LemmaToric}
	Take any point $ x\in X_0^{\an}\cong T^{\an} $, then there is a point $ z\in (T\times X_0)^{\an} $, such that $ p_1(z)=q(x)\in T^{\beth} $, $ p_2(z)=x\in X_0^{\an} $ and $ m(z)=q(x) $. 
\end{lemma}
We will give an explicit proof, but the idea is the same as in \cite{Thu}. 
\begin{proof}
	Recall that $ q(x) $ is the norm given by the formula
	\begin{align*}
	\left|\sum_{m\in M} c_m \chi^m\right|_{q(x)} = \max_{m\in M} |c_m||\chi^m|_x 
	\end{align*}
	We check directly that the norm
	\begin{align*}
	\left|\sum_{m,n\in M} c_{mn}\chi^m \otimes \chi^n\right|_z = \max_{m\in M} \left(\left|\sum_{n\in M}c_{mn}\chi^n\right|_x  \right)
	\end{align*}
	satisfies our conditions. 
\end{proof}

\subsection{Toric adelic divisors}
Now we introduce the toric adelic divisors on $ T $ and then compare it to toroidal $ b $-divisors in \cite{BB} and special adelic divisors in \cite{Son}.

Let $ f:N_{\R} \to \R $ be any continuous conical function on $ N_{\R} $. Since $ \mathrm{trop} $ is equivariant, $ f\circ \trop $ is an equivariant function on $ T^{\an} $, and thus induces an adelic divisor. We will give a detailed description of such adelic divisors.

\begin{definition}
	A toric adelic divisor $ D $ on a torus $ T $ is an adelic divisor such that $ D|_T=0 $ and its Green function $ g_D $ is the pull-back of a continuous conic function  $ f $ on $ N_{\R} $ under the tropicalization map. 
\end{definition}

Similarly as the model case, if an adelic divisor $ D $ is toric, then the function $ f $ has an explicit description. 

\begin{prop}
	If $ D $ is a toric adelic divisor, then for each $ a\in N $ corresponding to the 1-dimensional parameter subgroup $ \lambda_a:\mathbb{G}m\to T $, we have
	\begin{align*}
	\lambda_a^* D = f(a)[0] + f(-a)[\infty],
	\end{align*}
	where $ [0] $ means the adelic divisor $ [0]\in \Div(\P^1)\subset \hDiv(\mathbb{G}_m) $, and similarly for $ [\infty] $. 
\end{prop}
The proof of it is th same as the model case. Hence the function $ f $ is also called the supporting function, and is denoted by $ \SF_D:N_{\R}\to \R $. 

Now we are ready to prove the main theorem. 

\begin{theorem}\label{Theorem 4.7}
	Let $ D $ be an adelic divisor with $ D|_T=0 $. The following two statements are equivalent. 
	
	(1) The adelic divisor $ D $ is toric: the Green function $ g_D $ is the pull back of th continuous conic function $ SF_D:N_{\R}\to \R $ under the tropicalization map $ \mathrm{trop}:T^{\an}\to N_{\R} $. 
	
	(2) There is a Cauchy sequence $ (X_i,D_i) $ representing $ D $ such that each $ X_i $ is a complete toric variety and $ D_i\in \Div(X_i) $ is supported in the boundary $ X_i\backslash T $. 
\end{theorem}

\begin{proof} 
	(2) $ \Rightarrow $ (1): By Proposition \ref{PropSupportingFunction} and the fact that the limit of toric adelic divisors are still toric. 
	
	(1) $ \Rightarrow $ (2): We fix a projective model $ X=\mathbb{P}^n $ of $ T $, with boundary divisor $ D_0=X\backslash T $, and consider the set of all  functions $ f:N_{\R}\to \R $ which are conical, continuous, piecewise linear, convex on each face, and has integral value on $ N $. By \cite[Theorem 9]{KKMS}, every such function gives a coherent sheaf of $ T $-invariant fractional ideals contained in $ i_*\O_T $, and then give a complete toric variety with a Cartier divisor on it whose supporting function is exactly $ f $. 
	
	Take any norm on $ N_{\R} $, and let $ \Sigma $ be the unit sphere in $ N_{\R} $. The restriction gives an isomorphism $ C(N_{\R})_{\eqv} $ and $ C(\Sigma) $. We will identify these two linear spaces. 
	
	Since the set of all such functions are closed under addition, taking minimum, and separating points in $ \Sigma $, by the Boolean ring version of Stone-Weierstrass theorem, any continuous conical function can be approximated by these functions uniformly on all compact subset. Hence any toric adelic divisor can be approximated by model toric divisors. 
\end{proof}
In fact, from the proof of Proposition \ref{PropSupportingFunction}, we see that $ D $ is toric if and only if $ D $ is $ T $-invariant, meaning that for the multiplication and projection maps $ m,p_2:T\times T\to T $, two adelic divisors $ m^*D $ and $ p_2^* D $ are equal with respect to the partial compactification $ T\times T\to T\times X $. Since we don't introduce adelic divisors for partial compactification, we refer to \cite[\S 3.1]{Son}.

\subsection{Comparison theorems}

\subsubsection*{Comparison with toroidal $ b $-divisors}
Now we compare this theorem with the toroidal $ b $-divisors in  \cite[\S 4.3]{BB}. These results work well for general toroidal embedding, but here we only consider the toric varieties to simplify the case.  Then any allowable modification of a toric variety is given by birational toric morphism \cite[Page 24,90]{KKMS}.

Let $ X $ be a complete toric variety with open subset $ U\cong T $. Let $ \Sigma $ be the set of all proper birational toric morphism $ X'\to X $ with $ X' $ a smooth complete toric variety. Then in each $ X' $, the Cartier divisor and the Weil divisors are the same. 

They define the toroidal Riemann--Zariski space as the inverse limit in the category of locally ringed spaces
$$ \mathfrak{X}_U:=\lim_{\substack{\longleftarrow\\X'\in \Sigma}} X' $$
where $ X'\to X $ are all proper toroidal birational morphism w.r.t the subdivision of $ N_{\R} $. Then they define the group of $ \R $-Cartier toroidal $ b $-divisors as 
\begin{align*}
\mathrm{CbDiv}(\mathfrak{X}_U)_{\R} := \lim_{\substack{\longrightarrow\\X'\in \Sigma}}\Div(X,U)_{\R}
\end{align*}
with maps given by pull-back of Cartier divisors. It is endowed with inductive topology. They also define the toroidal Weil divisor as 
\begin{align*}
\mathrm{WbDiv}(\mathfrak{X}_U)_{\R} := \lim_{\substack{\longleftarrow\\X'\in \Sigma}}\Div(X,U)_{\R}
\end{align*}
with maps given by push-forward of Weil divisors. It is endowed with the projective topology. 

Let $ \hDiv(U)_t $ be the group of toric adelic divisors. 
\begin{proposition}
	We have canonical injections
	\begin{align*}
	\mathrm{CbDiv}(\mathfrak{X}_U)\subset \hDiv(U)_{t} \subset \mathrm{WbDiv}(\mathfrak{X}_U)
	\end{align*}
	Such that the spaces of corresponding supporting function ae given by 
	\begin{align*}
	\mathrm{PL}(N_{\R}) \subset C(N_{\R})_{\mathrm{conic}}\subset \mathrm{Conic}(N_{\Q}),
	\end{align*}
	where $ \mathrm{PL}(N_{\R})  $ is the space of piecewise linear conical functions on $ N_{\R} $ whose linearity locus is defined over $ \Q $, $ C(N_{\R})_{\mathrm{conic}} $ is the space of continuous conical functions on $ N_{\R} $, and $ \mathrm{Conic}(N_{\Q}) $ is the space of arbitrary conical functions on $ N_{\Q} $. 
\end{proposition}

Be careful that their definition of $ \phi_D $ in \cite[Definition 4.2]{BB} differs with ours by a sign. Anyway, the vector space of supporting functions is the same.
\begin{proof}
	The inclusion map $ \mathrm{CbDiv}(\mathfrak{X}_U)\subset \hDiv(U)_{t}  $ is clear, and we only prove the second one $ \hDiv(U)_{t} \subset \mathrm{WbDiv} (\mathfrak{X}_U) $. 
	
	Suppose that the adelic divisor $ D $ is represented by $ (X_i,D_i) $, and all $ X_i $ are complete normal toric varieties. We may assume that $ D_0 = X_0\backslash U $ is the boundary divisor. We also assume that for $ j>i $, there exists a projection map $ \pi_{ji}:X_j\to X_i $. Then there is a sequence $ \epsilon_i \in \Q^+ $, converging to $ 0 $, such that 
	\begin{align*}
	-\epsilon_i\pi^*_{j0} D_0\leqslant \pi^*_{ji} D_i - D_j \leqslant \epsilon_i \pi_{j0}^* D_0. 
	\end{align*}
	
	Now let $ Y\in \Sigma $. For each $ i $, let $ D'_{Y,i}\in \mathrm{Div}(Y) $ be defined as follows. 
	
	Let $ Y'_i\in \Sigma $ such that we have the morphisms $ p_1:Y'_i\to Y $ and $ p_2:Y'_i\to X_i $. Then let $ D'_{Y,i}:=p_{1,*} p_2^* D_i $. We can check that $ D'_{Y,i} $ converges to some $ E_Y $ in the finite dimensional subspace of divisors in $ \Div(Y)_{\R} $ whose support is on the boundary, and such that $ \{E_Y\} $ form a Weil toroidal $ b $-divisor. The rest part of supporting functions is easy. 
\end{proof}

\subsubsection*{Comparison with special adelic divisors}
We can also compare toric adelic divisors with special adelic divisors in \cite[\S 5.]{Son}. We need to fix an smooth toric embedding $ T\subset X $ with $ X $ smooth complete. Then each Cartier divisor on $ X $ is also a Weil divisor. Let $ D=X\backslash T $, and let $ D=\sum D_j $ be the decomposition of $ D $ into irreducible components. Since each $ D_j $ is also a Weil divisor, there is a unique ray $ L_j $ in $ N_{\R} $ corresponding to $ D_i $. Let $ v_j\in N $ be a primitive generator of $ D_j $. 

We view $ \Spec k((t)) $ be the generic points in $ \Spec k[[t]] $, then we can talk about order of adelic divisors on $ \Spec k((t)) $. 

\begin{definition}
	An adelic divisor on $ T $ is called special (with respect to $ T\subset X $) if there is a continuous conic function $ f:N_{\R}\to \R $ such that for all morphism $ i:\Spec k((t))\to T $ with extension $ \Spec k[[t]]\to X $ (still denoted by $ i $), we have
	\begin{align*}
	\ord_0\  i^*E = f\left(\sum_{j} \ord_0 i^*D_j\cdot v_j \right). 
	\end{align*}
	Note that we always have $ \ord_0 i^*D_j\geqslant 0 $, and $ \{j:\ord_0 i^*D_j\neq 0\} = \{ j:i([0])\in D_j \} $ has cardinality at most $ n+1 $, hence the sum is in fact on a simplicial cone. 
\end{definition}
Such a function $ f $ is called the skeleton function in \cite{Son}. 

\begin{theorem}
	An adelic divisor is toric if and only it is special with respect to a smooth toric compactification. 
\end{theorem}

\begin{proof} 
	Any smooth toric compactifications are locally of the form $ \mathbb{G}m^n\subset \mathbb{A}^k\times \mathbb{G}m^{n-k} $, as it corresponding to a simplicial cone in $ N_{\R} $ generated by a subset of a basis $ N $. See also \cite[p. 29]{Ful}. Therefore, it suffices to consider the case that $ T\subset \mathbb{A}^n $ and that $ i:k[[t]]\to \mathbb{A}^n $. 
	
	Let $ E $ ba a toric adelic divisor, and $ i:\Spec k((t))\to T $ be  a map which extends to $ \Spec k[[t]]\to \mathbb{A}^n $. Take the valuation $ |\cdot|_s $ on $ k((t)) $ such that $ |t|=e^{-1} $. We also denote $ s $ to be the unique point in $ \M(k((t)),|\cdot|_s) $, and $ i^{\an}:\{s\}\to T^{\an} $ be its analytification. Then we have that $ \ord_0 i^*E = g_E(i(s)) $. 
	
	If $ E $ is toric, then the Green function $ g_E(i^{\an}(s)) $ of $ E $ at $ i(s) $ is determined by its tropicalization, ans we have
\begin{align*}
	\ord_0 i^*E &= g_E(i^{\an}(s)) = f(\trop(i^{\an}(s))) \\
	&= f\left(\sum_{j} \ord_0 i^*D_j \cdot v_j\right)
\end{align*}
Therefore, $ E $ is special. 

Conversely, if $ E $ is special, then we can find a toric adelic divisor with the same supporting function on $ N_{\R} $, so it suffices to prove the case that if $ E $ is special with supporting function zero, then $ E $ is itself zero. This follows from Lemma \ref{Lemma2} below. 
\end{proof}

\begin{lemma}\label{Lemma2}
	Let $ U $ be any quasi-projective variety over $ k $, and $ D $ be an adelic divisor on $ U $ with $ D|_U=0 $. Then $ D=0 $ if and only if for all smooth curves $ C $ and maps $ \lambda:C\to U $, the pull-back $ \lambda^*D=0 $. 
\end{lemma}
\begin{proof}
	Suppose that the adelic divisor $ D $ is represented by $ (X_i,D_i) $, and all $ X_i $ are normal. We may assume that $ D_0 = X_0\backslash U $ is the boundary divisor, and that for $ j>i $, there is a morphism $ \pi_{ji}:X_j\to X_i $. Then there is a sequence $ \epsilon_i \in \Q^+ $, converging to $ 0 $, such that 
	\begin{align*}
	-\epsilon_i\pi^*_{j0} D_0\leqslant \pi^*_{ji} D_i - D_j \leqslant \epsilon_i \pi_{j0}^* D_0. 
	\end{align*}
	
	Now assume that $ D\neq 0 $. Then for $ i $ large enough, the set
	\begin{align*}
	\{E\in \Div(X_i)_{\Q}: -\epsilon_i D_0\leqslant D_i - E \leqslant \epsilon_i  D_0\} 
	\end{align*}
	does not contain the zero divisor. In particular, there is a irreducible Weil divisor $ F $ such that either $  \ord_F D_i < -\epsilon_i\ord_F D_0  $ or $ \ord_F D_i > \epsilon_i\ord_F D_0 $.

	Now, choose a closed point $ x\in X_i $ such that $ x\in F $ but not in any other boundary irreducible divisors. Take a complete curve $ C' $ on $ X_i $ passing through $ s $ which is not contained in the boundary. Let $ C $ be the normalization of $ C'\cap U $ and $ \lambda:C\to X_i $ be the map. Then we can check that $ \ord_s \lambda^* D \neq 0 $. 
\end{proof}

\subsection{Local criterion of toric adelic divisors}
The theorems above can be easily generated to toroidal embedding. Let $ U $ be an open subvariety of a projective smooth variety $ X $ over $ K $ such that $ Z=X\backslash U $ is a simple normal crossing divisor. In this section, assume that $ Z $ is reduced, and each irreducible component is smooth.

Then on each closed point $ x\in Z $, there is an open neighborhood $ V $ of $ x $ and an \'etale morphism $ \phi:V\to \mathbb{A}^n $ such that $ \phi^{-1}(\mathbb{G} m^k\times \mathbb{A}^{n-k}) = V\cap U $. 

Let $ \Delta(Z) $ be the dual complex of $ |D| $. By \cite{Thu} or Theorem \ref{Theorem2.10}, there is a continuous contraction map $ \tilde{U}^b\to \Delta(Z) $. In particular, if we fix the boundary divisor $ Z $. any continuous function on $ \Delta(Z) $ induces a geometric adelic divisor $ \tilde{E} $ on $ U $. 

Let $ f $ be a continuous function on $ \Delta(Z) $ which induces a geometric adelic divisor $ \tilde{E} $. Suppose that there is a absolute norm $ |\cdot| $ on $ K $, we are trying to study the Green function of $ \tilde{E} $ on $ U_{\hyb}^{\an} $. Let $ U^{\an} $ be the usual analytification of $ U/(K,|\cdot|) $, viewed as a subspace of $ U_{\hyb}^{\an} $.

\begin{theorem}	\label{Theorem4.12}
	The function $ g $ on $ U^{\an} $ is a Green function of $ \tilde{E} $  if and only if the following holds. 
	
	For any $ x\in Z $, there is an analytic closed coordinate neighborhood $ V\cong \bar{\Delta}^n $ of radius $ r<1 $ in $ X^{\an} $ such that $ V\cap Z $  is defined by $ z_1\ldots z_r $. Then there is a simplex $ \Delta_x $ of dimension $ r-1 $ on $ \Delta(Z) $ associated with the point $ x $. Let $ f_{\Delta_x} $, there 
	
	\begin{align*}
	g(|\cdot|_z) = f_{\Delta_x} (-\log |T_1|_z,\ldots,-\log|T_r|_z) + o\left(-\sum_{i=1}^r \log |T_i|_z\right)
	\end{align*}
	for $ z\in V\cap U^{\an} $ approaches $ Z $, and $ \{T_i\} $ is the local coordinate of $ V $. 
	
	For $ k=\C $ with the usual norm, we can choose the usual analytic coordinate of disk $ \Delta^n $, and the formula becomes it is just 
	\begin{align*}
	g(z_1,\ldots,z_n) = f_{\Delta_x}(-\log |z_1|,\ldots,|z_r| ) + o\left(-\sum_{i=1}^r \log |z_i|\right). 
	\end{align*}
\end{theorem}

If $ K $ is non-archimedean, then as a set, polydisk is not the product of disk, so the formula seems different. 

\begin{proof}
	We need to check the continuity property on the hybrid compactification $ U_{\hyb}^{\an} $, or the normalized boundary part $ \tilde{U}_{\hyb}^{b} $. Since $ \tilde{U}_{\hyb}^b   = U^{\an}\cup \tilde{U}^{b}_{0} $, and the Green function on $ \tilde{U}^{b}_0 $ is the pull-back of $ f $ on $ \tilde{U}_0^{b}\to \Delta(Z) $, we only need to check the continuity on the space which is glued by $ \tilde{U}_{\hyb}^{b} $ and $ \Delta(Z) $ under the inclusion $ i:\tilde{U}_0^b \to U^{\an} $ and contraction map $ \pi: \tilde{U}_0^b \to \Delta(Z) $. Define
	\begin{align*}
	\tilde{U}_{\hyb,Z}^{b}:= \tilde{U}_{\hyb}^{b} \cup_{i,f} \Delta (Z).
	\end{align*}
	Now this new topological space is also a compactification of $ U^{\an} $ whose boundary is $ \Delta(Z) $, and it is just the compactification in \cite[\S 2]{BJ} locally. 
	
	Now let $ g $ be any continuous function on the usual compactification on $ U^{\an} $. Fix a positive Green function of $ Z $, their quotient $ g/g_Z $ is well-defined on $ U^{\an}\subset \tilde{U}_{\hyb}^{b} $. Thus $ g $ is an Green function of $ \tilde{E} $ if and only if $ g/g_Z $ extends continuously to $ \Delta(Z) $ with boundary function $ f $. 
	
	\'Etale locally, the variety $ U\subset X $ is isomorphic to $ \mathbb{G}m^r \times \mathbb{A}^{n-r} \subset \mathbb{A}^n $, thus the topological space $ \tilde{U}_{\hyb,Z}^{b} $ is locally homeomorphic to the space  $  \Delta^{*r}\times \Delta^{n-r}\cup S_r $, where $ S_r $ is a $ r $-dimensional simplex. 
	
	For $ \mathbb{G}m $, we can extend the tropicalization map $ \trop $ to the hybrid space
	\begin{align*}
	\trop_{\hyb}: \mathbb{G}m_{\hyb}^{\an} \to \R^n
	\end{align*}
	by the same formula $ x\mapsto (-\log |T_1|_x,\ldots,-\log |T_n|_x) $. In particular, the pull-back of $ f $ will give a continuous map on $ \mathbb{G}m_{\hyb}^{\an} $ by the formula	
	\begin{align*}
	f_{\hyb}(|\cdot|_z) = f(-\log |T_1|_z,\ldots,-\log |T_n|_z). 
	\end{align*}
	
	Now let $ g $ be any continuous function on $ \mathbb{G}m^{\an} $. Then $ g $ is a Grren function of $ f $ if and only if 
	\begin{align*}
	\frac{g}{g_Z}(|\cdot|_z) = \frac{f(-\log |T_1|_z,\ldots,-\log |T_n|_z)}{-\sum_{i=1}^r \log |T_1|_z} + o(1)
	\end{align*}
	as $ z $ approaches the boundary. \'Etale locally, on punctured poly disk $ g_Z $ has the form $ -\sum_{i=1}^r \log |T_1| $, where $ \{T_i\}_{i=1}^n $ is a regular coordinate at $ x $, and $ Z $ is defined by $ T_1\ldots T_r=0 $. Thus we get our result.

\end{proof}

\section{Global Monge-Amp\`ere Measure}\label{GlobalMA}
In this section we give an application of our results. It will not be used later.

Let $ k=\Z $ or a field, and let $ U $ be a quasi-projective flat variety over $ k $ of dimension $ n+1 $. Let $ \hPic(U)_{\mathrm{int}} $ be the group of integrable adelic line bundles defined by Yuan--Zhang. They also defined a intersection theory of adelic line bundles in \cite[\S 4]{YZ}, which is a symmetric multi-linear map on $ \hPic(U)_{\mathrm{int}}^{n+1} $. It can be extend as follows. 
\begin{prop}
	The intersection pairing extends uniquely to a multi-linear map
	\begin{align*}
	\hPic(U)\times \hPic(U)_{\mathrm{int}}^{n}\to \R,
	\end{align*}	
	such that if $ \bar{L}_1,\ldots,\bar{L}_{n}\in \hPic(U)_{\mathrm{int}} $ are fixed, then the map $ \hDiv(U)\to \R $ given by $ \bar{E}\mapsto (\bar{E}\cdot\bar{L}_1\cdots \bar{L}_{n}) $ is continuous with respect to the boundary norm on $ \hDiv(U) $. In other word, it is allowed to have at most one non-integrable adelic line bundle. 
\end{prop}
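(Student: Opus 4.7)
The plan is to extend the pairing by continuity from the dense subgroup $\hDiv(U)_{\mod}\subset \hDiv(U)$. Fix integrable $\bar{L}_1,\ldots,\bar{L}_n\in \hPic(U)_{\mathrm{int}}$. Every $\bar{E}\in\hDiv(U)_{\mod}$ represents an integrable class in $\hPic(U)$ (any model Cartier divisor on a projective model is a difference of two ample divisors, and its model Green datum is a difference of two semipositive ones), so $(\bar{E}\cdot\bar{L}_1\cdots\bar{L}_n)$ is already defined in \cite{YZ}. Since two elements of $\hDiv(U)_{\mod}$ have finite boundary distance only if they share the same integral part, $\hDiv(U)$ decomposes into boundary-topology-closed cosets indexed by integral parts, each of which is the completion of the corresponding coset in $\hDiv(U)_{\mod}$. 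Hence it suffices to establish a Lipschitz estimate on each coset, which by linearity reduces to proving
\begin{align*}
|(\bar{F}\cdot\bar{L}_1\cdots\bar{L}_n)|\leqslant C\,\|\bar{F}\|_{\bar{Z}}
\end{align*}
for every $\bar{F}\in \hDiv(U)_{\mod}$ with zero integral part, where $\bar{Z}$ is a fixed boundary divisor defining the norm and $C=C(\bar{Z},\bar{L}_1,\ldots,\bar{L}_n)$.

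To prove the estimate, I would write each $\bar{L}_i=\bar{M}_i^{+}-\bar{M}_i^{-}$ as a difference of two nef integrable adelic line bundles and expand multilinearly, reducing to the case where every $\bar{L}_i$ is nef. The definition of the boundary norm supplies the sandwich $-\|\bar{F}\|_{\bar{Z}}\bar{Z}\leqslant \bar{F}\leqslant \|\bar{F}\|_{\bar{Z}}\bar{Z}$, while Yuan--Zhang's arithmetic positivity of intersections with nef integrable line bundles yields $(\bar{G}\cdot\bar{M}_1\cdots\bar{M}_n)\geqslant 0$ whenever $\bar{G}$ is effective and the $\bar{M}_i$'s are nef. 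Combining these, $C$ can be taken to be a finite sum of terms of the form $(\bar{Z}\cdot\bar{M}_{1}^{\pm}\cdots\bar{M}_{n}^{\pm})$, all finite because $\bar{Z}$ is itself a model (hence integrable) arithmetic divisor.

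With the Lipschitz bound in hand, $\bar{E}\mapsto (\bar{E}\cdot\bar{L}_1\cdots\bar{L}_n)$ extends continuously and uniquely from $\hDiv(U)_{\mod}$ to $\hDiv(U)$ within each integral-part coset, and the extension is multi-linear by continuity. It descends to a well-defined map out of $\hPic(U)$ because principal divisors vanish against integrable classes already at the model level via the classical projection formula, and this vanishing is preserved by the continuous extension. Uniqueness of the extension is immediate from density of $\hDiv(U)_{\mod}$.

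The main obstacle will be the arithmetic positivity step: establishing $(\bar{G}\cdot\bar{M}_1\cdots\bar{M}_n)\geqslant 0$ for effective $\bar{G}\in \hDiv(U)_{\mod}$ with zero integral part and nef integrable $\bar{M}_i$'s, in a form quantitative enough to serve as a linear bound in $\|\bar{F}\|_{\bar{Z}}$. The qualitative inequality is part of Yuan--Zhang's machinery, but producing the constant $C$ — rather than only the sign — requires tracking their limiting construction of intersection numbers on the non-proper base $U$ and verifying that the positivity survives the Cauchy approximations defining the $\bar{M}_i$'s. Once this is secured, the remainder of the argument is routine Cauchy-sequence bookkeeping.
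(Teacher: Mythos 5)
Your proposal is correct and follows essentially the same route as the paper: both arguments rest on Yuan--Zhang's positivity of intersections of effective model divisors against (strongly) nef integrable adelic line bundles, combined with the boundary-norm sandwich $-\varepsilon\bar{Z}\leqslant\bar{F}\leqslant\varepsilon\bar{Z}$, to get a Cauchy/Lipschitz estimate and then extend by continuity, treating the general case by writing divisors and integrable bundles as differences of effective, resp.\ strongly nef, ones. The ``main obstacle'' you flag is not really one: the needed inequality is exactly the qualitative positivity for strongly nef bundles already in Yuan--Zhang, and the constant comes for free from the sandwich, which is how the paper argues.
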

\begin{proof}
	Fix $ \bar{L}_1,\ldots,\bar{L}_{n}\in \hPic(U)_{\mathrm{snef}} $, and let $ E\in \hDiv(U)_{\mod} $ be an effective model divisor. Then $ \bar{E} $ is also integrable, and we have that
	\begin{align}\label{Formula1}
	(\bar{E}\cdot\bar{L}_1\cdots \bar{L}_{n})\geqslant 0.
	\end{align}
	
	Now let $ \bar{E}\in \hDiv(U) $ be an effective adelic divisor. Then $ \bar{E}=\lim_{i\to\infty} \bar{E}_i $ for effective model adelic divisors $ \bar{E}_i $. Fix a boundary divisor $ \bar{D}_0 $. Then there exist $ \varepsilon_i\in \Q_+ $ converging to $ 0 $, such that
	\begin{align*}
	-\varepsilon_i \bar{D}_0\leqslant \bar{E}_i-\bar{E}_j \leqslant \varepsilon_i \bar{D}_0
	\end{align*}
	in terms of effectivities for all $ j>i $. 
	
	Apply $ (-\cdot\bar{L}_1\cdots \bar{L}_{n}) $, by inequality (\ref{Formula1}), we have 
	\begin{align*}
	-\varepsilon_i (\bar{D}_0\cdot\bar{L}_1\cdots \bar{L}_{n})\leqslant (\bar{E}_i-\bar{E}_j\cdot\bar{L}_1\cdots \bar{L}_{n}) \leqslant \varepsilon_i (\bar{D}_0\cdot\bar{L}_1\cdots \bar{L}_{n})
	\end{align*}
	Thus the sequence $ (\bar{E}_i\cdot\bar{L}_1\cdots \bar{L}_{n}) $ is a Cauchy sequence, and we define 
	\begin{align*}
	(\bar{E}\cdot\bar{L}_1\cdots \bar{L}_{n}) = \lim_{i\to\infty} (\bar{E}_i\cdot\bar{L}_1\cdots \bar{L}_{n}).
	\end{align*}
	For general case, note that any adelic line bundles is the difference of two effective line bundles, we get our results. 
\end{proof}
In particular, let $ \hDiv(U)_b $ be the group of adelic divisors with zero integral part, then by Theorem \ref{Theorem3.7}, we have that $ \hDiv(U)_b\cong C(U^{\an})_{\eqv} $, which is further non-canonically isomorphic to $ C(\widetilde{U}^{b}) $ for the compact Hausdorff space $ \widetilde{U}^{b} $ defined in \S \ref{Berkovich spaces}. We thus get a morphism
\begin{align*}
\hPic(U)_{\mathrm{int}}^{n} \to (C(\widetilde{U}^{b}))^{\vee}\cong \mathcal{M}(\widetilde{U}^{b})
\end{align*}
where $ \mathcal{M}(U^{b}) $ is the Banach space of signed Radon measures on $ U^{b} $, and the latter isomorphism is given by the Riesz representation theorem for measures. We call it the \emph{global Monge-Amp\`ere measure} associated to the integrable adelic line bundles $ \bar{L}_1,\ldots,\bar{L}_{n} $, denoted by $ \mathrm{MA}(\bar{L}_1,\ldots,\bar{L}_{n}) $, and the intersection pairing is also a integration
\begin{align*}
(\bar{E}\cdot\bar{L}_1\cdots \bar{L}_{n}) = \int_{\widetilde{X}} h_{\bar{E}}(x)\  \mathrm{MA}(\bar{L}_1,\ldots,\bar{L}_n),
\end{align*}
where $ h_{\bar{E}} = g_{\bar{E}} / g_{\bar{D}_0} $ for the fixed boundary divisor $ D_0 $, and $ \mathrm{MA}(\bar{L}_1,\ldots,\bar{L}_n) $ is also computed via $ D_0 $. 

If all $ \bar{L}_i $ are equal, the measure is denoted by $ \mathrm{MA}(\bar{L}) $. 

To emphasize the independence of $ \bar{D}_0 $, we introduce the following notation.
\begin{align*}
\int_{U^{b}} g(x) \MA(\bar{L}_1,\ldots,\bar{L}_n) := \int_{\widetilde{U}^{b}} \frac{g(x)}{g_{\bar{D}_0}(x)}   \mathrm{MA}(\bar{L}_1,\ldots,\bar{L}_n),
\end{align*}
for any norm-equivariant measurable function on $ U^{b} $, where the measure $ \mathrm{MA}(\bar{L}_1,\ldots,\bar{L}_n) $ on $ \widetilde{U}^{b} $  is computed via the same boundary divisor $ \bar{D}_0 $.

\subsection*{Question}
Solve the equation $ \mathrm{MA}(\bar{L})=\mu $ for a given semi-positive measure $ \mu $ and a given line bundle $ L\in \Pic(U)_{\mathrm{snef}} $.

We have the following naive conditions for existence and uniqueness of the Monge-Amp\`ere equation. 

\begin{definition}
	Let $ \bar{E}\in \hDiv(U)_b $ be an integrable adelic divisor. Then $ E $ is called numerically trivial on the boundary if 
	\begin{align*}
	(\O(\bar{D})\cdot\O(\bar{E})\cdot\bar{L}_1\cdots\bar{L}_{n-1}) =0
	\end{align*}
	for all $ \bar{D}\in \hDiv(U)_b $ and all integrable adelic line bundles $ \bar{L}_i $. 
\end{definition}
We fix a model line bundle $ (X,\bar{L}_0) $ for $ (U,L) $. Then any other line bundle $ \bar{L} $ with $ \bar{L}|_U\cong L $ can be written as a sum of $ L_0 $ with a boundary divisor. 

Now consider the equation \begin{align}\label{FormulaMA}
\MA(\bar{L}) = \mu,
\end{align}
which is equivalent to solve the equation
$ \mathrm{MA}(\bar{L}_0 +f(x))=\mu $
for integrable norm-equivariant function $ f(x)\in C(U^{b})_{\eqv} $. 
\begin{prop}
	(1) If the equation (\ref{FormulaMA}) has a solution, then $ \bar{L}_0 $ and $ \mu $ satisfy the conditions
	\begin{align*}
	\O(\bar{D})\cdot \bar{L}_0 \cdots \bar{L}_0 = \int_{U^{b}} g_{\bar{D}} d \mu
	\end{align*}
	for all divisors $ \bar{D}\in \hDiv(U)_b $ which are numerically trivial on the boundary.
	
	(2) If $ \bar{L}_1 $ is a solution, then so is $ \bar{L}_1+\O(\bar{E}) $ for all adelic divisor $ \bar{E}\in \hDiv(U)_b $ which are numerically trivial on the boundary.  
\end{prop}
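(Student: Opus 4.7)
The plan is to reduce both parts to a single bookkeeping argument built on the integration formula
$$(\O(\bar{D})\cdot \bar{L}^n) = \int_{\widetilde{U}^b} h_{\bar{D}}\, d\MA(\bar{L}),$$
together with multilinearity of the extended intersection pairing and the vanishing built into the definition of vertical-numerical-triviality.

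For part (1), assume $\MA(\bar{L}) = \mu$ and let $\bar{D}\in \hDiv(U)_b$ be vertically-numerically-trivial. Since $\bar{L}|_U\cong L\cong \bar{L}_0|_U$, we may write $\bar{L} = \bar{L}_0 + \O(\bar{M})$ for some $\bar{M}\in \hDiv(U)_b$, which is integrable because both $\bar{L}$ and $\bar{L}_0$ are. Expanding multilinearly,
$$(\O(\bar{D})\cdot \bar{L}^n) = (\O(\bar{D})\cdot \bar{L}_0^n) + \sum_{k=1}^n \binom{n}{k} \bigl(\O(\bar{D})\cdot \O(\bar{M})^k\cdot \bar{L}_0^{n-k}\bigr).$$
Every summand with $k\geqslant 1$ has the form $\O(\bar{D})\cdot \O(\bar{M})\cdot (\text{product of integrable bundles})$, so it vanishes by vertical-numerical-triviality of $\bar{D}$ applied with $\bar{E}=\bar{M}$. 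Combining with the integration formula on the right yields
$$(\O(\bar{D})\cdot \bar{L}_0^n) = (\O(\bar{D})\cdot \bar{L}^n) = \int_{U^b} g_{\bar{D}}\, d\mu,$$
which is exactly the asserted identity.

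For part (2), fix a solution $\bar{L}_1$ and a vertically-numerically-trivial $\bar{E}\in \hDiv(U)_b$, which we may assume integrable so that $\bar{L}_1 + \O(\bar{E})$ still lies in $\hPic(U)_{\mathrm{int}}$. By Theorem \ref{Theorem3.2}, every continuous function on $\widetilde{U}^b$ arises as $h_{\bar{D}}$ for some $\bar{D}\in \hDiv(U)_b$, so by Riesz representation the sought identity $\MA(\bar{L}_1 + \O(\bar{E})) = \MA(\bar{L}_1) = \mu$ reduces to verifying
$$\bigl(\O(\bar{D})\cdot (\bar{L}_1 + \O(\bar{E}))^n\bigr) = \bigl(\O(\bar{D})\cdot \bar{L}_1^n\bigr)\qquad \text{for all } \bar{D}\in \hDiv(U)_b.$$
Expanding the left-hand side binomially and noting that every term with a positive power of $\O(\bar{E})$ factors as $\O(\bar{D})\cdot \O(\bar{E})\cdot(\text{integrable factors})$, all such terms vanish by vertical-numerical-triviality of $\bar{E}$, leaving only the $k=0$ summand.

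The main obstacle is purely of a bookkeeping nature: one must ensure that every intersection product appearing in the binomial expansion lies within the domain of validity of the extended pairing of the preceding proposition, i.e., involves at most one non-integrable factor. Since $\bar{E}$ and $\bar{M}$ are integrable elements of $\hDiv(U)_b$ and $\O(\bar{D})$ is the only potentially non-integrable factor in each product, this is automatic. Once this verification is made, both claims collapse to the same cancellation of cross-terms.
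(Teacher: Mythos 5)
Your proposal is correct, and since the paper offers no argument for this proposition (it simply declares ``the proofs are easy''), your cross-term cancellation --- write the solution as $\bar{L}_0+\O(\bar{M})$ (resp.\ perturb by $\O(\bar{E})$), expand multilinearly, and kill every term containing both boundary divisors using the definition of vertical-numerical-triviality, then identify $(\O(\bar{D})\cdot\bar{L}^n)$ with $\int h_{\bar{D}}\,d\MA(\bar{L})$ and invoke $\hDiv(U)_b\cong C(\widetilde{U}^{b})$ plus Riesz --- is exactly the intended argument. Two minor tidying points: $\bar{E}$ is integrable by definition of vertical-numerical-triviality (no extra assumption needed), and the integrability of $\bar{M}=\bar{L}-\bar{L}_0$, which legitimizes the $k\geqslant 2$ cross terms, rests on the standard fact that the model line bundle $\bar{L}_0$ is itself integrable.
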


The proofs are easy. We call them the naive conditions for the Monge-Amp\`ere equation. It would be natural to ask whether these conditions are sufficient under some positivity conditions of $ L $ and $ \mu $.

\section{Beilinson--Bloch height}

\subsection{Preliminaries}

Let $ K $ be either a number field or a function field of smooth projective curve $ B $ over a field $ k $.  If $ K $ is a number field, let $ \O_K $ be its ring of integers, and let $ B=\Spec \O_K $.  

Let $ X_{\eta} $ be a smooth projective variety over $ K $ of dimension $ n $. Throughout this introductory section, we always assume the following. 
\begin{assumption}
	Suppose that $ X_{\eta} $ has a projective regular model $ X $ over $ B $. 
\end{assumption}
Later we will only consider the archimedean height and complex function field, so we can use Hironaka's resolution of singularities. Further, the concept homologically trivial will be independent of the cohomology theory. 

Let $ \widehat{CH}^*(X) $ be the arithmetic Chow group in the number field case, and the usual Chow group in the function field case. By \cite[\S 3]{SABK} and \cite{Ful98}, there is a pairing  
\begin{align*}
\langle\cdot ,\cdot\rangle_{X}:\widehat{CH}^p(X)_{\Q}\times \widehat{CH}^{n+1-p}(X)_{\Q}\to \R. 
\end{align*}

Let $ CH^p(X)^0_{\Q} $ denotes the subspace of $ \widehat{CH}(X)_{\Q} $ consisting of cycle classes whose restriction to $ X_{\eta} $ is homologically trivial. Let 
\begin{align*}
CH^p_{fin}(X)_{\Q} := \ker \left( \CH^p(X)_{\Q}\to \CH^p(X)_{\Q}\right)
\end{align*}
be the subgroup of Chow group generated by vertical cycles. 

Let $ \widehat{CH}^p_{fin}(X)^{\perp}_{\Q}\subset \widehat{CH}(X)^{n+1-p}  $ be the orthogonal complement to the subgroup $ \widehat{CH}^p_{fin}(X)_{\Q} $ with respect to the pairing above. Cycles in $  \widehat{CH}^p_{fin}(X)^{\perp}_{\Q} $ are called \emph{admissible} cycles. 

There is a restriction map
\begin{align*}
\phi_{\X} : \CH^p(X)^{\perp}_{\Q} \to \CH^{n+1-p}_{\hom}(X_{\eta})_{\Q}
\end{align*}
Then we can define the height pairing on the image of $ \phi_{X} $ by applying the pairing above to preimage under $ \phi_{X} $. 

A main conjectures about this pairing is whether $ \phi_X $ is surjective. If it is surjective(and assume the existence of regular model), then we will get a pairing
\begin{align*}
\CH^p_{\hom}(X_{\eta})_{\Q} \times \CH^{n+1-p}_{\hom}(X_{\eta})_{\Q}\to \R. 
\end{align*} 
This is the \emph{conditional Beilinson--Bloch height}. If one prefers, they could just use the image of $ \phi_{\X} $ and say that the Beilinson--Bloch height is well-defined on a smaller subgroup, and conjectures the smaller group is in fact the whole $ \CH^q_{\hom}(X)_{\Q} $. 

Now we assume that we have two cycles $ Z,W\in \CH^p_{\hom}(X) $, with disjoint support. Assume that one of them, say, $ Z $, has an admissible extension $ \mathcal{Z} $. Take any extension $ \CW $ of $ W $. Then the height pairing $ \langle\CZ,\CW \rangle_{X} $ is the sum of the local heights
\begin{align*}
\langle Z, W \rangle_{X} = \sum_{v\in M_K} \langle \CZ,\CW \rangle_{X, v},
\end{align*}
where $ M_K $ is the set of all places in $ K $. Note that the existence of local height is a local problem. 

It is known that the local height exists when $ v $ is a archimedean place, or $ X $ has good reduction at $ v $, see \cite{Bei}. For the archimedean place, we give a more detailed description. 

\subsection{Archimedean height pairing}

Let $ X $ be a smooth projective variety over $ \C $, and $ Z, W $ be two homologically trivial cycles on $ X $ of complementary codimension $ p,q $ with $ p+q=n+1 $. Assume that they have disjoint support. There exists a Green current of type $ (p-1,p-1) $ of $ Z $, smooth away from the support of $ Z $, satisfying the current equation
\begin{align*}
\ppb g_Z = \pi i \delta_Z. 
\end{align*}
Then the archimedean height is defined by the formula
\begin{align*}
\langle Z,W \rangle_{\infty} : = -\int_{W} g_Z. 
\end{align*}

The archimedean pairing is studied by Hain in \cite{Hai90} using Hodge theory. Given $ Z,W $ as before, he constructed a mixed Hodge structure with graded pieces $ \Z(1) $, $ H =  H^{2p-1}(X,\Z(p)) $, $ \Z $, called the biextension associated to $ Z $ and $ W $. He should that biextension is parametrized by non vanishing point on the Poincare line bundle $ B(H) $ on torus $ JH\times J\hat{H} $, where $ JH $ is the intermediate Jacobian, and $ \hat{H} $ is the dual of $ H $. Further, he constructed a canonical metric on $ B(H) $ using Hodge theory, and showed that the archimedean height can be calculated via Hodge theory 
\begin{align*}
\langle Z,W \rangle_{\infty} = \log \|b_{Z,W}\|
\end{align*}
where $ b_{Z,W} $ is the point in $ B(H) $ associated to the biextension given by $ Z,W $.

Now we come to the family version. Let $ S $ is quasi-projective over $ \C $, and $ \pi:X\to S $ be a smooth projective morphism. Take $ Z,W $ be two cycles in $ X $ of complementary codimension $ p,q $ with $ p+q=n+1 $, flat over $ S $, with generically disjoint support. Hain's construction works well for family, and they get a hermitian line bundle $ \bar{L}_{Z,W} $ on the complex manifold $ S^{\an} $ as follows. 

For each $ p $, the family $ \pi:X\to S $ will induce a variation of Hodge structure $ V= R^{2p-1}f_*\Q(p) $ on $ S^{\an} $, thus a toric bundle $ J^p $ over $ S^{\an} $. It parametrized extension of variation of Hodge structure of $ R^{2p-1}f_*\Q(p) $ by $ \Z $. The Poincare duality of cohomology groups will induces a a dual of torus bundle $ J^{2p-1}\cong (J^{2q-1})^{\vee} $. Usually for $ p\neq 1,n $ the torus $ J $ is not algebraic, so this is a purely analytic construction. 

Hain construct a Poincare line bundle on $ P $ on the torus bundle $ J H\times J\check{H} $ whose non-vanishing sections parametrize biextension of Hodge structure. This line bundle is equipped with a canonical metric $ \|\cdot\| $, which reflects how far is the biextension mixed Hodge structure from a split one. 

Now given a flat cycle $ Z $ on $ X $ with homologically trivial on each fiber, we will get a section $ \nu_Z:S\to J $, usually called a normal function. It is a holomorphic, horizontal, with admissible conditions on the behavior at infinity, see \cite[\S 7]{Voi03}. 

If $ Z,W $ are two flat cycles, then we get $ \nu_Z\times \nu_W:S\to J^p\times J^{q} $. Define the hermitian line bundle $ \bar{L}_{Z,W}:= (\nu_Z,\nu_W)^* \overline{P} $. The following result can be found in \cite[\S 3.4]{Hai90}.
\begin{theorem}[Hain]
	Give two flat cycles $ Z,W $ with generically disjoint support, there is a line bundle $ \bar{L}_{Z,W} $ with a meromorphic section $ \mu_{Z,W} $, such that 
	\begin{align*}
	\langle Z_s,W_s \rangle = -\log \|\mu_{Z,W}\|_s
	\end{align*}
	whenever $ Z_s $ and $ W_s $ have disjoint support. 
\end{theorem}

\begin{theorem}\label{Theorem Hain}
	The line bundle $ \bar{L}_{Z,W} $ depends only on the cycle class of $ Z,W $ in Chow group, and thus we get a pairing
	\begin{align}\label{ArchPairing}
	CH^p_{\hom}(X/S) \times CH^q_{\hom}(X/S) \to \hPic(S^{\an}).
	\end{align}
\end{theorem}
\begin{proof}
	By \cite[\S 9.2.3]{Voi03}, the normal function $ \nu_Z $ depends only on the cycle class on the fiber $ [Z_s] $. Since $ Z $ is flat over $ S $, for each $ s\in S $, the restriction of cycle $ f^*:CH^p_{\hom}(X/S)\to \CH^p_{\hom}(S_s) $ and the restriction of scheme coincide. 
\end{proof}

Later, in \cite{BP}, they should that the underlying line bundle $ L_{Z,W} $ is algebraic and the meromorphic section $ \mu_{Z,W} $ is a rational. In other words, it extends to some smooth compactification $ \bar{S} $ by Serre's GAGA principle. 

However, the metric on $ L_{Z,W} $ cannot  extend to the boundary. This phenomenon is called height jump, and is first discovered by Hain in \cite{Hai13}. It is studied by Brosnan and Pearlstein carefully. We quote their results \cite[Property 18, Theorem 22, Proposition 140]{BP} in the following. 

Let $ H $ be a torsion-free variation of pure Hodge structure of weight $ -1 $ over the punctured disk $ \Delta^{*r} $, with unipotent monodromy. Let $ V $ be an admissible biextension variation such that $ Gr_{-1}V = H $, then there is a unique degree $ 1 $ homogeneous rational function $ \mu(V)\in \Q(x_1,\ldots,x_r) $ with the following property. 

\begin{theorem}[Brosnan--Pearlstein]\label{ThmBP}
	Let $ \phi:\Delta\to \Delta^r $ be a holomorphic map such that $ \phi(0)=0 $ and $ \phi(\Delta^*)\subset \Delta^{*r} $. Suppose that $ \phi(s)=(\phi_1(s),\ldots,\phi_r(s)) $, and $ \phi_i(s) $ has order $ m_i $ $ m_i>0 $. Then 
	\begin{align*}
	h_{V_{\phi(s)}} = -\mu(V)(m_1,\ldots,m_r) \log |s| +O(1)
	\end{align*}
	for $ s $ close to $ 0 $. 

	Further, the rational function $ \mu(V) $ extends to a continuous homogeneous function on $ \R_{\geqslant 0}^r  $. 
\end{theorem}
The function $ \mu(V) $ is closely related to the asymptotic height pairing in \cite[p.1740]{BP}. They only differed by a linear term in $ m_j $.  By abuse of language, we also call it asymptotic height pairing.

Compare this result with Theorem \ref{Theorem4.12}, we see $ \mu(V) $ could viewed as a function on the deal simplex $ \Delta(Z) $ of $ Z=\Delta^r\backslash (\Delta^{*r}) $, thus its continuity implies that locally it is a Green function of some geometric adelic divisor. Taking globalization, we get the following results. 

Let $ S $ be a smooth quasi-projective variety over $ \C $, and $ \pi:X\to S $ smooth projective. 

\begin{theorem}\label{Theorem6.5}
	The pairing in \ref{Theorem Hain} determines a unique pairing
	\begin{align}\label{GeoPairing}
	CH^p_{\hom}(X/S) \times CH^q_{\hom}(X/S) \to \widetilde{Pic}(S/\C),
	\end{align}
	satisfying the following. 	
	\begin{enumerate}
		\item It is functorial under pull-back $ S'\to S $ for $ S' $ smooth quasi-projective. 
		
		\item If $ \dim S =1 $, the two pairings (\ref{ArchPairing}) and  (\ref{GeoPairing}) coincide in the sense that they induce a pairing
		\begin{align*}
		CH^p_{\hom}(X/S) \times CH^q_{\hom}(X/S) \to \widehat{Pic}(S/\C).
		\end{align*} 
	\end{enumerate}
\end{theorem}

\begin{proof}
	Given any two flat cycles $ Z,W $ of complementary codimension with generically disjoint support, there is a hermitian line bundle $ \bar{L}_{Z,W} $ and a rational section $ \mu_{Z,W} $ such that the underlying line bundle $ L_{Z,W} $ is algebraic and $ \mu_{Z,W} $ is rational. We will show that these information will determine a geometric adelic line bundle. 
	
	We fix a smooth compactification $ \bar{S} $, and let $ U\subset S $ such that $ Z|_U $ and $ W|_U $ has disjoint support. Then we get a function $ h(s)=\log \|\mu_{Z,W}\|_{s} $. 
	
	First we define an geometric adelic divisor $ \tilde{E} \in \hDiv(U/\C) $. Let $ V $ be the biextension variation on $ U $. Since we are taking the geometric case, the biextension variation is admissible, possible with quasi-unipotent monodromy. If we fix a smooth compactification $ \bar{U} $, locally near $ x\in \bar{U}\backslash U $, it has unipotent monodromy after base change. But in the new coordinate, the rational function in Theorem \ref{ThmBP} will only differs by a linear transformation of $ m_i $, so $ \mu(V) $ satisfies the same continuity property on $ \mathbb{R}_{\geqslant 0} $. As a result, $ \mu(V) $ will induce a continuous function on $ \Delta_x $. 
	
	Gluing all local function together, $ \mu(V) $ will induce a continuous function on $ \Delta(\bar{U}\backslash U) $. By pull-back, it can be viewed as a continuous equivariant function on $ U^{\an} $, and thus an geometric adelic divisor $ \tilde{E}\in \widetilde{Div(U/\C)} $. 
	
	Finally we check that $ \tilde{E}\in \widetilde{Div}(S/\C) $. We may assume the $ \bar{U} $ admits a morphism $ f:\bar{U} \to \bar{S} $. Since $ \tilde{E} $ is determined by asymptotic behavior of $ h(s)=\log \|\mu_{Z,W}\| $, and $ \mu_{Z,W} $ is a meromorphic section of line bundle $ f^*L_{Z,W} $,  the Green function  $ \mu(V) $ on $ (f^{-1}(S))^{\beth} $ is just the Green function of the divisor $ \mathrm{\div}(f^*\mu_{Z,W}) $. Thus $ \tilde{E} $ is the pull-back of some adelic divisor on $ S $, which, by abuse of notation, is still denoted by $ E $. 
	
	The functorial property is given by pull-back of variation of mixed Hodge structures. If $ \dim S=1 $, then $ \bar{L} $ extends to a unique hermitian line bundle $ \bar{S} $, and its underlying line bundle on $ \bar{S} $ is just our $ \tilde{L} $. 
\end{proof}

We say that the line bundle $ \tilde{L}_{Z,W} $ parametrizes the asymptotic height pairing.

\subsection{Comparison with geometric Beilinson--Bloch height}
In this section, we show that asymptotic height pairing agrees with the Beilinson--Bloch pairing under certain existence conditions. 

Let $ X $ be a smooth projective variety over a field $ K $. Recall that the group of homologically trivial cycle class is defined by 
\begin{align*}
 CH_{\hom,\ell}(X) := \ker CH^p(X)\to H^{2p}(X,\Q_{\ell}) .
\end{align*} 

If $ \mathrm{char}\ K=0 $, for any cycle $ Z $ in $ X $, take a finitely generated subfield $ k\subset K $ such that both $ X $ and $ Z $ are defined over $ k $, and embed $ k $ into $ \C $, then by the comparison of $ \ell $-adic cohomology and Betti cohomology, $ Z $ is $ \ell $-adic homologically trivial if and only if it's homologically trivial under Betti cohomology. In particular, if $ \mathrm{char}\ K=0 $, then $ CH^{i}_{\hom,\ell}(X) $ does not depend on $ \ell $, and we shall drop $ \ell $ from the notation. 

If $ K=\C $, then we can avoid the $ \ell $-adic cohomology and use Betti cohomology instead. 

If $ K $ is a finitely generated field over $ \C $, we have an equivalent definition. For a cycle $ Z $ in $ X $, choose a model $ \pi:\X\to \mathcal{S} $ and a cycle $ \mathcal{Z} $ in $ \X $, with $ \mathcal{S} $ a smooth quasi-projective variety over $ \C $, $ \pi $ a projective smooth morphism, and $ \mathcal{Z} $ flat over $ S $, such that the generic fiber of $ (\X,\mathcal{S},\mathcal{Z}) $ is $ (X,\Spec K,Z) $. By the proper smooth base change theorem of $ \ell $-adic cohomology, $ Z $ is homologically trivial in $ X $ if and only if for one (hence all) $ s\in S(\C) $, $ \mathcal{Z}_s $ is homologically trivial in $ \mathcal{X}_s $, and $ \mathcal{X}_s $ is a complex projective smooth variety, so we can use Betti cohomology instead.

Let $ S $ be a smooth curve over $ \C $, $ \pi:X\to S $ be a smooth projective morphism, $ \bar{X} $, $ \bar{S} $ be smooth projective compactification of $ X,S $, and $ \bar{\pi}: \bar{X}\to \bar{S} $ be extension of $ \pi $. Clearly $ \bar{\pi} $ is flat.

Let $ Z\in CH^p_{\hom}(X/S) $ with complementary codimension $ p,q $. 	Assume that there exists a projective model $ \bar{\pi}: \bar{X}\to \bar{S} $ with $ \bar{X} $, $ \bar{S} $ smooth, and an extension $ \tilde{Z}\in CH^p(\bar{X}) $ such that the image of $ \tilde{Z} $ lies in the kernel of the composition map
\begin{align*}
CH^p(\bar{X})^{0} := \ker \left(CH^p(\bar{X}) \to H^{2p}(\bar{X},\C) \to H^0(\bar{S}, R^{2p}\bar{\pi}_* \C )\right).
\end{align*}

Then the Beilinson--Bloch height of $ Z $ and $ W $ exists as in \cite[1.2]{Bei} and \cite[Proposition 2.6]{Kah}. In \cite[\S 2E]{Kah}, they showed it coincides with the cohomological definition in \cite{RS}.
\begin{assumption}\label{AssumpExtension}
	Any flat cycle $ Z\in CH^p(X/S) $ admits an extension $ \tilde{Z}\in CH^p(\bar{X})^0 $.
\end{assumption}

\begin{theorem}\label{Thm6.7}
	If the Assumption \ref{AssumpExtension} above holds, the asymptotic height pairing agrees with the Beilinson--Bloch height pairing. In other words, choose a local coordinate near a point $ s\in \bar{S}\backslash S $, assume that $ 0 $ is a singular fiber, we have
	\begin{align*}
		\langle Z_s,W_s \rangle_{s,ar} = -\langle Z,W \rangle_{0,na} \log |s| + O(1)
	\end{align*}
	for $ s $ close to $ 0 $. Here $ \langle Z_s,W_s \rangle_{s,ar} $ means the archimedean Beilinson--Bloch height at the smooth fiber $ X_s $, and $ \langle Z,W \rangle_{0,na} $ means the geometric Beilinson--Bloch height for the field $ K=K(S) $ and the place $ 0 $ with bad reduction.
	
	The error term $ O(1) $ is a harmonic function near $ 0 $, and in particular, bounded near $ 0 $. 
\end{theorem}

In the following, we always assume that the Assumption \ref{AssumpExtension} holds. Let $ \bar{Z} $, $ \bar{W} $ be the Zariski closure of $ Z,W $ on $ \bar{X} $.

\begin{lemma}
	There exists another model $ \bar{X}' $ with an extension $ \tilde{Z}'\in CH^p( \bar{X}')^0 $, such that $ \widetilde{Z}' $ intersects $ \bar{W} $ properly.  
\end{lemma}

\begin{proof}
	Let $ \bar{Z}_i $, $ \bar{W}_j $ are irreducible components of $ Z $ and $ W $. By successive blowup of $ \bar{Z}_i\cap \bar{W}_j $ and taking proper transform, we may assume that $ \bar{Z}\cap \bar{W} $ is empty. Use resolution of singularity again, we assume that $ X $ is smooth with $ X_0 $ normal crossing divisor. Then the pull-back of $ \tilde{Z} $ (which is a cycle class) could be represented by a cycle satisfying the Assumption \ref{AssumpExtension}. So we may assume that $ \bar{Z} $ and $ \bar{W} $ has no intersection. Write $ \tilde{Z} = \bar{Z} +Z_0 $, then $ Z_0 $ is a linear combination of $ i_{j,*} Y $ for some cycle $ Y\in CH^{p-1} (X_{0,j}) $, where $ X_{0,j} $ is a irreducible component of the singular fiber $ X_0 $ of dimension $ n $ and $ i:X_{0,j}\to X $ be the injection. 
	
	Since each $ X_{0,j} $ is smooth of codimension $ 1 $ in $ \bar{X} $, it is a divisor on $ \bar{X} $, hence its scheme intersection with $ W $ coincide with cycle intersection $ i_j^* \bar{W} $. Thus for cycle $ Y $ in $ X_{0,i} $, we have projection formula
	\begin{align*}
	\bar{W} \cdot i_{j,*} Y = i_{j,*}(i_j^* \bar{W}\cdot Y)
	\end{align*}
	Here $ \cdot $ means the intersection in Chow group. By moving lemma in \cite[\S 11.4]{Ful98}, there is a another cycle $ Y' $ in $ X_{0,j} $ which intersects $ i_j^* \bar{W} $ properly, and we take $ i_{j,*}Y' $ to replace $ Y $. 
\end{proof}

Now we will compare the archimedean and non-archimedean height. We will use arithmetic intersection in \cite{SABK}, and try to follow their the notations of Green currents. 

\begin{lemma}\label{Lemma6.9}
	For each point $ s_0\in \bar{S}\backslash S $, there is a local analytic coordinate $ U\cong \Delta $ centered at $ s_0 $, and a $ (p-1,p-1) $-current on  $ g_{\tilde{Z}} $ of $ \tilde{Z} $ on $ \bar{\pi}^{-1} (U) $, such that it is a smooth form away from $ |Z| $, and satisfied the current equation
	\begin{align*}
	dd^c g_{\tilde{Z}} = - \delta_{\tilde{Z}}.
	\end{align*}
\end{lemma}

\begin{proof}
	First we choose any Green form $ g'_{\tilde{Z}} $ of $ \tilde{Z} $ as in \cite[\S 2]{SABK}. Then there is a smooth $ (p,p) $-form $ \omega^{p,p} $ on $ X $, such that 
	\begin{align*}
	dd^c g'_{\tilde{Z}} = -\delta_{\tilde{Z}} + \omega^{p,p}.
	\end{align*}

	By our assumption on $ \tilde{Z} $, there exist an analytic neighborhood $ U $  of $ s_0 $ such that both $ dd^c g_{\tilde{Z}}  $ and $ \delta_{\tilde{Z}} $ vanish in the current cohomology $ H^{2p}_{cur}(\pi^{-1}(U),\C) $, which agrees with the Betti or de Rham cohomology $ H^{2p}(\pi^{-1}(U),\C) $. Therefore, there is a smooth $ (p-1,p-1) $-form $ \omega^{p-1,p-1} $ such that $ dd^c\omega^{p-1,p-1} = \omega^{p,p} $. Then just take $ g_{\tilde{Z}} = g'_{\tilde{Z}} - \omega^{p-1,p-1} $. 
\end{proof}

Now let $ \tilde{Z}\in CH^p(\bar{X})^0 $, and $ W\in CH^q_{\hom}(X/S) $. Let $ \bar{W} $ be its Zariski closer. We may assume that $ \tilde{Z} $ and $ \bar{W} $ intersects properly. 

Since now the theorem is pure analytically local, we may assume $ \bar{S}=\Delta $, the unit disk, and $ S=\Delta^* $. Let $ g_{\tilde{Z}} $ be the current in Lemma \ref{Lemma6.9}. Take any other Green form $ g_{\bar{W}} $ of $ \bar{W} $, and take arithmetic intersection $ (\tilde{Z},g_{\tilde{Z}}) * (\bar{W},g_{\bar{W}}) $ of \cite[II.3]{SABK}, which is of the form $ (\tilde{Z}\cdot W, g_{\tilde{Z}} * g_{\bar{W}}) $ satisfying the equation
\begin{align*}
dd^c (g_{\tilde{Z}} * g_{\bar{W}}) = -\delta_{\tilde{Z}\cdot \bar{W}} 
\end{align*}
by \cite[II.3, Theorem 4]{SABK}, where $ \tilde{Z}\cdot \bar{W} $ is the intersection product with Serre's mulplicity formula. 

Taking push forward, we get an equation of current
\begin{align*}
dd^c (\pi_* g_{\tilde{Z}} * g_{\bar{W}}) = -\delta_{\pi_*(\tilde{Z}\cdot \bar{W})} = - \langle Z,W \rangle_{s_0,\mathrm{na}} [s_0]. 
\end{align*}

\begin{lemma}
	The $ (0,0) $-current $ \pi_* (g_{\tilde{Z}} * g_{\bar{W}}) $ is represented by the function $ h(s) = 2\langle Z_s, W_s \rangle_{a,\mathrm{ar}} $. 
\end{lemma}

\begin{proof}
	By direct computation, for any compact support smooth  $ (1,1) $-form $ f(s) \d s\wedge \d \bar{s} $, we have
	\begin{align*}
	&\qquad \ \langle \pi_* g_{\tilde{Z}} * g_{\bar{W}}, f(s) \d s\wedge \d \bar{s} \rangle \\
	& = \int_{\bar{W}} g_{\tilde{Z}} \wedge \pi^*(f(s) \d s\wedge \d \bar{s})\\
	& = \int_{\Delta^*} \left(\int_{W_s} g_{\tilde{Z}} \right) f(s) \d s \wedge \d \bar{s} \\
	&= \int_{\Delta} 2\langle Z_s,W_s \rangle_{s,\mathrm{ar}} f(s) \d s \wedge \d \bar{s} .
	\end{align*}
\end{proof}

Now we get the current equation 
\begin{align*}
 \d \d^c\  2\langle Z_s,W_s \rangle_{s,\mathrm{ar}} = - \langle Z,W \rangle_{s_0,\mathrm{na}} [s_0] .
\end{align*}
Solve this equation, we get
\begin{align*}
\langle Z_s,W_s \rangle_{s,\mathrm{ar}} = -\langle Z,W \rangle_{s_0,\mathrm{na}}\log |s| +O(1),
\end{align*}
here $ O(1) $ is a harmonic near $ s_0 $, and in particular, bounded. Thus we get Theorem \ref{Thm6.7}. 

\begin{remark}
	In \cite{SABK}, they only define push forward for projective smooth morphism. Here we actually use push forward for flat morphism. 
	
	Let $ f:X\to S $ is projective flat and generically smooth, and $ (Z,g_Z) $ be an arithmetic Chow cycle, i.e., $ \d \d^c g_Z = -\delta_Z + \omega $ for some smooth form $ \omega $. If we apply push forward, we will get an equation
	\begin{align*}
	dd^c f_* g_Z = -f_*\delta_{Z} + f_*\omega. 
	\end{align*}
	However, for non-smooth $ f $, the push forward $ f_*\omega $ will not be a smooth form on $ S $ anymore. Instead, it is a smooth form on the smooth locus of $ f $, and along the boundary, it may have singularity. 
\end{remark}

\end{document}